\definecolor{labelkey}{gray}{.8}
\definecolor{refkey}{gray}{.8}
\definecolor{darkblue}{rgb}{0,0,0.7} 
\definecolor{darkred}{rgb}{0.9,0.1,0.1}
\definecolor{darkgreen}{rgb}{0,0.5,0}
\renewenvironment{proof}[1][\smallskip\noindent\proofname]{{\smallskip\noindent\bfseries #1. }}{\qed}
\newtheorem{thm}{Theorem}[section]
\newtheorem{prop}[thm]{Proposition}
\newtheorem{lem}[thm]{Lemma}
\theoremstyle{remark}
\newtheorem{rem}[thm]{Remark}
\theoremstyle{definition}
\renewcommand{\leq}{\leqslant}
\renewcommand{\geq}{\geqslant}
\renewcommand{\subset}{\subseteq}
\newcommand{\1}{\mathbf{1}}
\newcommand{\R}{\mathbb{R}}
\newcommand{\Z}{\mathbb{Z}}
\newcommand{\eps}{\varepsilon}
\newcommand{\dd}{\, \mathrm{d}}
\newcommand{\wto}{\rightharpoonup}
\newcommand{\loc}{\mathrm{loc}}
\DeclareMathOperator{\Id}{Id}
\DeclareMathOperator*{\dv}{div}
\DeclareMathOperator*{\supp}{supp}
\numberwithin{equation}{section}
\title{Homogenization of the Navier-Stokes equations in perforated domains in the inviscid limit}
\author{Richard M. H\"ofer\thanks{richard.hoefer@ur.de, Faculty of Mathematics, University of Regensburg, Germany}}
\begin{document}

\maketitle

\begin{abstract}
We study the solution $u_\eps$ to the Navier-Stokes equations in $\R^3$  perforated by small particles centered at $(\eps \Z)^3$ with no-slip boundary conditions at the particles.
We study the behavior of $u_\eps$  for small $\eps$, depending on the diameter $\eps^\alpha$, $\alpha > 1$, of the particles and the viscosity $\eps^\gamma$, $\gamma > 0$, of the fluid. We prove quantitative convergence results for $u_\eps$ in all regimes when the local Reynolds number at the particles is negligible. Then, the particles approximately exert a linear friction force on the fluid. The obtained  effective macroscopic equations depend on the order of magnitude of the collective friction. We obtain a) the Euler-Brinkman equations in the critical regime,  b) the Euler equations in the subcritical regime and c) Darcy's law in the supercritical regime.
\end{abstract}

{\small 
\noindent Keywords: Homogenization, perforated domain, porous media, suspensions,  Navier-Stokes equations, inviscid limit, Euler equations, Darcy's law, Euler-Brinkman equations.

\smallskip

\noindent MSC: 35Q30, 35Q31,  76D07, 76M50, 76S05,76T25
}

\section{Introduction}

The homogenization of fluid flows in perforated domains has been intensively studied in the last decades. Various models for the fluids reaching from incompressible inviscid flows (see e.g. \cite{HillairetLacaveWu22, MikelicPaoli99,LacaveFilhoLopes18,LacaveMasmoudi16}) to compressible viscous flows (see e.g. \cite{Masmoudi02, HoeferKowalczykSchwarzacher21,BellaOschmann23, Oschmann22}) and even non-Newtonian fluids (see e.g. \cite{Mikelic18})   have been considered with different boundary conditions, including Navier slip conditions (see e.g. \cite{Allaire91}) and so-called sedimentation boundary conditions (see e.g. \cite{NiethammerSchubert19, Gerard-VaretHoefer21,DuerinckxGloria21Viscosity}).

From the application oriented point of view, interest in such homogenization problems arises from the study of flow through porous media and of suspension flows. In the case of such particulate flows, homogenization problems where the particle evolution is frozen or prescribed can be considered as a first step towards the derivation of fully coupled models between the fluid flow and the dispersed phase.

The limiting behavior of solutions to the incompressible 
(Navier-)Stokes equations with fixed viscosity in perforated domains with no-slip boundary conditions is by now  quite well understood. On the microscopic lengthscale of the particles, the fluid inertia becomes negligible. Therefore, in the limit of many small particles, a linear friction relation (Stokes law) prevails, giving rise to an effective massive term, the so-called Brinkman term. Depending on the particle sizes and number density, the Brinkman term becomes negligible, dominant or of order one in the homogenization limit, leading to the (Navier-)Stokes equations, Darcy's law and the (Navier-)Stokes-Brinkman, respectively, see e.g. \cite{Tartar80, Allaire90a, Allaire90b, Mikelic91,  DesvillettesGolseRicci08, Feireisl2016, HillairetMoussaSueur19, GiuntiHoefer19,  CarrapatosoHillairet20, Giunti21, HoeferJansen20, LuYang23}.

For  the case of the Navier-Stokes equations with vanishing viscosity, only very few results are available though. 
The problem of considering such fluids in perforated domains with very small viscosity (or more precisely large macroscopic Reynolds numbers) is a very relevant one in applications.
Indeed, in the modeling of sprays, it is not unusual to couple  kinetic equations for the dispersed phase to the Euler equations (see e.g. \cite{BarangerDesvillettes06, CarrilloDuanMoussa11}).
%
%Corresponding nonlinear friction laws are sometimes used in the modeling of sprays where kinetic equations are coupled to the Euler equations (see e.g. \cite{Orourke}).
On the other hand, regarding porous media, understainding flow at large Reynolds number is  very important (see e.g. \cite{BalhoffMikelicWheeler10}) and nonlinear extensions of Darcy's law, in particular the  Darcy-Forchheimer equations, are proposed at very large Reynolds numbers. 
Although the rigorous derivation of such \emph{nonlinear} effective models seems currently out of reach, the present work aims at identifying the effective behavior in all scaling limits where a linear friction law prevails.
We emphasize that the effective models we obtain are completely different from the ones that result by starting from the Euler equations in perforated domains (see e.g. \cite{LacaveMasmoudi16, MikelicPaoli99, LacaveFilhoLopes18, HillairetLacaveWu22} for such models). 
Instead, correspondingly to the (Navier-)Stokes equations with constant viscosity, we identify and prove homogenization limits in a critical, subcritical and supercritical  regime yielding the Euler-Brinkman equations, the Euler equations and Darcy's law, respectively. 
To the author's knowledge, the Euler-Brinkman equations have not even been formally derived in the literature before. This can be viewed as a first step towards the rigorous justification of spray models like the one analyzed in \cite{CarrilloDuanMoussa11} that couples the incompressible Euler equations to a Vlasov equation through a linear friction force.

\subsection{Setting and outline of the main results}

Let $\mathcal T \Subset  B_{1/4}(0)$, 
the reference particle, be a fixed closed set with smooth boundary, such that $B_{1}(0) \setminus \mathcal T$ is connected and $0 \in \mathring  {\mathcal T}$.
For $0 < \eps < 1$, we consider particles centered at $x^\eps_i := \eps i$,  $i \in \Z^3$. Moreover, precisely, for $\alpha \geq 1$, we define
\begin{align}
 \Omega_\eps := \R^3 \setminus \cup_{i \in \Z^3} \mathcal T_i^\eps, &&
    \mathcal T_i^\eps := x_i^\eps + \eps^{\alpha} \mathcal T
\end{align}

Then, for some $T>0$, $\gamma > 0$ and $\mu_0 > 0$, we consider solutions $u_\eps$ to the Navier-Stokes equations
\begin{align} \label{Navier.Stokes.holes}
\begin{array}{rcll}
    \partial_t u_\eps + u_\eps \cdot \nabla u_\eps- \mu_0 \eps^\gamma \Delta u_\eps + \nabla p_\eps &=& f_\eps &\qquad \text{in } (0,T) \times \Omega_\eps, \\
        \dv u_\eps &=& 0 &\qquad \text{in }  (0,T) \times \Omega_\eps, \\
    u_\eps &=& 0 &\qquad \text{on }  (0,T) \times \partial \Omega_\eps, \\
    u_\eps(0,\cdot) &=& u_0^\eps &\qquad \text{in } \Omega_\eps
    \end{array}
\end{align}
for some given $f_\eps \in L^2(0,T;L^2(\R^3))$  and $u_0^\eps \in L^2_\sigma(\Omega_\eps)$,  where
\begin{align}
    L^2_\sigma(\Omega_\eps) := \{ v \in L^2(\Omega_\eps) : \dv v = 0,  v \cdot n = 0 \text{ on } \partial \Omega_\eps \}.
\end{align}
It is well known
that then at least one  Leray  solution $u_\eps$ exist, i.e. a weak solution which satisfies the energy inequality
\begin{align} \label{energy.inequality}
    \frac 1 2 \|u_\eps(t)\|^2_{L^2(\Omega_\eps)} + \mu_0 \eps^\gamma \|\nabla u_\eps\|^2_{L^2((0,t) \times \Omega_\eps)} \leq \frac 1 2 \|u_0^\eps\|_{L^2(\Omega_\eps)}^2 + \int_0^t \int_{\Omega_\eps} f_\eps \cdot u_\eps \dd x \dd t \quad \forall 0 \leq t \leq T. \quad
\end{align}

We focus on the case $\alpha > 1$ which characterizes the regime where the particle diameters $\eps^\alpha$ are small compared to the inter-particle distance $\eps$. 
In a nutshell, the effect of the particles on the fluid can then be described through a superposition of linear friction laws provided that the fluid inertia is negligible on the lengthscale of the particles. 
More precisely, we consider the particle Reynolds number
\begin{align} \label{particle.Reynolds}
	\mathrm{Re}^\eps_{\mathrm{part}} := \frac{\text{particle diameter}\times\text{fluid velocity}}{\text{viscosity}} =  U_\eps {\eps^{\alpha-\gamma}} 
\end{align} 
where $U_\eps$, the order of magnitude of the fluid velocity,  has yet to be determined. 
Then, if $\mathrm{Re}^\eps_{\mathrm{part}} \ll 1$, the influence of each particle on the fluid can be approximated by a friction force determined from  the unique solutions $(w_k,q_k) \in \dot H^1(\R^3) \times L^2(\R^3)$ to the linear Stokes problem
\begin{align} \label{w_k}
\begin{array}{rcll}
    - \Delta w_k + \nabla q_k &=& 0 &\qquad \text{in } \R^3 \setminus \mathcal T, \\
    \dv w_k &=& 0&\qquad \text{in } \R^3 \setminus \mathcal T, \\
    w_k &=& e_k& \qquad \text{on } \partial \mathcal T
\end{array}
\end{align}
through the associated resistance matrix $\mathcal R \in \R^{3 \times 3}$
\begin{align} \label{def:R}
    \mathcal R_{jk} = \int_{\R^3 \setminus \mathcal T} \nabla w_k : \nabla w_j,
\end{align}
which is a positive definite symmetric matrix.
Neglecting fluid inertia and particle interaction,  classical scaling considerations imply that each particle approximately contributes a friction force $F_i = - \mu_0 \eps^{\alpha+\gamma} \mathcal R (u_\eps)_i$
where $(u_\eps)_i$ should be understood as a suitable average of $u_\eps$ on some lenthscale $\eps^\alpha \ll d_\eps \leq \eps$ around $x_i^\eps$. Taking into account that the particle number density is $\eps^{-3}$ leads to approximating the fluid velocity $u_\eps$ by $\tilde u_\eps$ which satisfies the Navier-Stokes equations in the \emph{whole space} with an additional linear friction term $\mu_0 \eps^{\alpha + \gamma - 3} \mathcal R \tilde u_\eps$, sometimes referred to as \emph{Brinkman force}. More precisely, provided $\mathrm{Re}^\eps_{\mathrm{part}} \ll 1$, we expect $u_\eps \approx \tilde u_\eps$ where 
\begin{align} \label{Navier.Stokes.approx}
\begin{array}{rcll}
    \partial_t \tilde u_\eps + \tilde u_\eps \cdot \nabla \tilde u_\eps- \mu_0 \eps^\gamma \Delta \tilde u_\eps + \mu_0 \eps^{\alpha + \gamma - 3} \mathcal R \tilde u_\eps + \nabla \tilde p_\eps &=& f_\eps &\qquad \text{in } (0,T) \times \R^3, \\
        \dv u_\eps &=& 0 &\qquad \text{in }  (0,T) \times \R^3.
    \end{array}
\end{align}
 
From this approximation, we may easily identify the limiting behavior, where we distinguish the \emph{critical} regime as $\gamma + \alpha = 3$, the \emph{subcritical} regime as $\gamma + \alpha > 3$ and the supercritical regime as $\gamma + \alpha < 3$. 
Before writing down the limiting equations, we revisit the constraint $\mathrm{Re}^\eps_{\mathrm{part}} \ll 1$. 
In the critical and subcritical regime, the Brinkman force is at most of order one, and therefore the solution $\tilde u_\eps$, and thus $u_\eps$ and $U_\eps$ from \eqref{particle.Reynolds}, are expected to be of order $1$, provided $u_0^\eps$ and $f_\eps$ are of order $1$.
Thus, in the critical and subcritical regime,
\begin{align}
\mathrm{Re}^\eps_{\mathrm{part}} = \frac{\eps^{\alpha - \gamma}}{\mu_0},
\end{align} 
which leads to the condition $\alpha > \gamma$.

On the other hand, in the supercritical regime, the Brinkman force dominates thus slows down the fluid velocity to
$U_\eps = \eps^{3-\alpha - \gamma}$.
Therefore, in the supercritical case,
\begin{align}
\mathrm{Re}^\eps_{\mathrm{part}} = \frac{\eps^{3 - 2\gamma}}{\mu_0},
\end{align} 
leading to the condition $\gamma < 3/2$.

\medskip

Taking the formal limit in \eqref{Navier.Stokes.approx}, assuming $f_\eps \to f$ and $u_0^\eps \to u_0$ leads to the following limit systems. The regimes are illustrated in Figure \ref{fig.regimes}.

\begin{figure}[ht] \label{fig.regimes}
\begin{center} \includegraphics[scale=0.8]{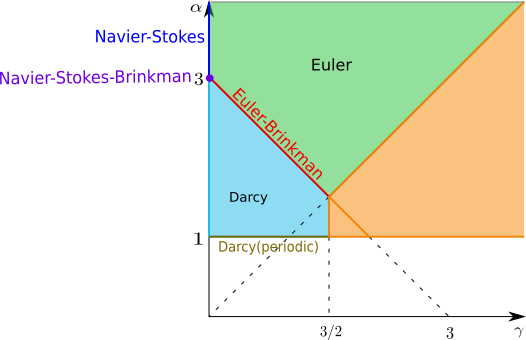} \hspace*{4cm}
\end{center}
\caption{Scaling regimes of effective equations}
\end{figure}

\begin{itemize}
\item In the critical regime $\alpha + \gamma = 3$ with $\alpha > 1$ and $\alpha > \gamma > 0$, we obtain (for $\mu_0 = 1$ for simplicity) the Euler-Brinkman equations\footnote{One might argue that Euler-Darcy would be a more appropriate name for this system but this is already used for a different system that arises as  homogenization limit of the $2$-dimensional Euler equations in perforated domains, see e.g. \cite{MikelicPaoli99}.}
\begin{align} \label{Euler-Darcy}
    \begin{array}{rcll}
    \partial_t u + u \cdot \nabla u + \mathcal R u + \nabla p &=& f &\qquad \text{in } (0,T) \times \R^3, \\
    \dv u &=& 0  &\qquad \text{in } (0,T) \times \R^3, \\
    u(0,\cdot) &=& u_0 &\qquad \text{in } \R^3.
    \end{array}
\end{align}

\item In the subcritical regime for   $ \alpha + \gamma > 3$ with $\alpha > 1$ and $\alpha > \gamma > 0$, we obtain the Euler equations
\begin{align} \label{Euler}
    \begin{array}{rcll} \partial_t u + u \cdot \nabla u +\nabla p &=& f &\qquad \text{in } (0,T) \times \R^3, \\
    \dv u &=& 0  &\qquad \text{in } (0,T) \times \R^3, \\
    u(0,\cdot) &= &u_0 &\qquad \text{in } \R^3.
    \end{array}
\end{align}
Since the particles do not create any effective perturbation on the limit system, the asymptotically linear friction law guaranteed by $\alpha > \gamma > 0$ is actually not required to obtain this limit case but it instead suffices that $\mathrm{Re}^\eps_{\mathrm{part}} \leq c_0$ for some $c_0 > 0$ independent of $\eps$.  This corresponds to the regime  $\alpha = \gamma > 3/2 $
with $\mu_0 \geq M$ for some $M$ sufficiently large.

\item In the supercritical regime, for $\alpha + \gamma < 3$ with $\alpha > 1$ and $\gamma < 3/2$,  $u_\eps \to 0$. Thus, we rescale time and velocities to obtain a nontrivial limit. More precisely, if $\hat u_\eps$ is a solution to \eqref{Navier.Stokes.holes} with $\mu_0 = 1$, we consider the function $ u_\eps(t,x) =  \eps^{\alpha + \gamma-3} \hat u_\eps(\eps^{\alpha+\gamma-3} t,x)$. This 
rescaled velocity solves (after rescaling accordingly $f_\eps$, $p_\eps$ and $u_\eps^0$ without renaming them)
\begin{align} \label{Navier.Stokes.holes.rescaled}
\begin{array}{rcll}
     \eps^{6 - 2\alpha-2\gamma} \left(\partial_t u_\eps + u_\eps \cdot \nabla u_\eps \right)-  \eps^{3-\alpha} \Delta u_\eps + \nabla p_\eps &=& f_\eps & \qquad \text{in } (0,T) \times \Omega_\eps, \\
    \dv u_\eps &=& 0 &\qquad \text{in } \Omega_\eps, \\
    u_\eps &=& 0 &\qquad \text{on } \partial (0,T) \times  \Omega_\eps, \\
    u_\eps(0,\cdot) &=& u_0^\eps &\qquad \text{in } \Omega_\eps.
\end{array}
\end{align}
Performing the same rescaling on the system \eqref{Navier.Stokes.approx}, we formally obtain Darcy's law in the limit $\eps \to 0$, namely
\begin{align} \label{Darcy}
\begin{array}{rcll}
    \mathcal R u + \nabla p &=& f &\qquad \text{in } (0,T) \times \R^3,    \\
     \dv u  &=&0 &\qquad \text{in } (0,T) \times  \R^3.
\end{array}
\end{align}
\end{itemize}

\subsection{Statement of the main results}
The precise results are the following quantitative convergence results for $u_\eps$  in all three regimes under regularity assumption on the solution $u$ to the respective limit system. Smooth solutions exist at least for short times. 
Moreover, in the supercritical regime, we obtain in addition a weak convergence result
in $L^2(0,T;L^2(\R^3))$ assuming only a weak solution $u \in L^2(0,T;L^2(\R^3))$ to Darcy's law \eqref{Darcy}.

\begin{thm}[Critical regime] \label{th:Euler-Brinkman}
      Let $\alpha \in (3/2,3)$, $\gamma = 3 -\alpha$ and $\mu_0 = 1$. Let $T>0$, $u_0 \in H^4(\R^3)$, $f \in C(0,T;H^2(\R^3))$ and $(u,p) \in C^1(0,T;H^4(\R^3)) \times L^\infty(0,T;H^3_\loc(\R^3))$ be a solution to \eqref{Euler-Darcy}. Moreover, for $0 < \eps < 1$ let $u^\eps_0 \in L^2_\sigma(\Omega_\eps)$, $f_\eps \in L^2(0,T;L^2(\Omega_\eps))$
      and let $u_\eps \in L^2(H^1_0(\Omega_\eps)) \cap C(0,T;L^2(\Omega_\eps))$ be  a Leray solution to \eqref{Navier.Stokes.holes}. Then, there exists   $C > 0$ which depends only on  the reference particle $\mathcal T$ and, monotonously, on $T$, $\|f\|_{L^\infty(0,T;H^2(\R^3))}$,  $\|u\|_{C^1(0,T;H^4(\R^3))}$ and $\|\nabla p\|_{L^\infty(0,T;H^2(\R^3))}$ such that for all $t \leq T$ 
  \begin{align} \label{error-estimate}
      \|( u_\eps - u)(t)\|_{L^2(\Omega_\eps)}^2\leq  C \left(\|u_\eps^0 - u_0\|_{L^2(\Omega_\eps)}^2 + \|f_\eps - f\|_{L^2(0,T;L^2(\Omega_\eps))}^2 +  \left(\eps^{2 \alpha - 3 } +  \eps^{6-2\alpha}\right) \right) .
  \end{align}
\end{thm}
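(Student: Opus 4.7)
The plan is to compare $u_\eps$ to a microscopically corrected version $\tilde u_\eps$ of the limit $u$ that satisfies the no-slip condition on $\partial \Omega_\eps$, and then close a relative-energy estimate on $r_\eps := u_\eps - \tilde u_\eps$ via Gronwall's inequality.

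Following the Allaire--Cioranescu--Murat philosophy, I fix a mesoscopic scale $d_\eps$ with $\eps^\alpha \ll d_\eps \leq \eps/4$ and a smooth cutoff $\chi_i^\eps$ supported in $B_{d_\eps}(x_i^\eps)$ equal to one on $B_{d_\eps/2}(x_i^\eps)$. With the rescaled cell profile $w_k^\eps(x) := w_k\bigl((x - x_i^\eps)/\eps^\alpha\bigr)$, I define
\begin{equation}
W_\eps(x) := I - \sum_i \chi_i^\eps(x) \sum_k w_k^\eps(x) \otimes e_k,
\end{equation}
so that $W_\eps = I$ outside $\bigcup_i B_{d_\eps}(x_i^\eps)$ and $W_\eps = 0$ on $\partial \mathcal T_i^\eps$ (since $w_k = e_k$ on $\partial \mathcal T$). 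The approximation is $\tilde u_\eps := W_\eps u + b_\eps$, where $b_\eps$ is a Bogovskii correction supported in the annuli $\{d_\eps/2 < |x-x_i^\eps| < d_\eps\}$ restoring $\dv \tilde u_\eps = 0$. Then $\tilde u_\eps \in H^1_0(\Omega_\eps) \cap L^2_\sigma(\Omega_\eps)$ and coincides with $u$ outside the mesoscopic balls.

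The decisive computation is the residual of $\tilde u_\eps$ in \eqref{Navier.Stokes.holes}. Using $-\Delta w_k + \nabla q_k = 0$ in $\R^3 \setminus \mathcal T$ and the resistance matrix \eqref{def:R}, integration by parts against a test function yields, per particle, a contribution of order $\mu_0 \eps^{\gamma + \alpha} \mathcal R_{jk}\, u_k(t, x_i^\eps)\, \varphi_j(x_i^\eps)$. Summed over the $\eps^{-3}$ particles under the critical scaling $\mu_0 \eps^{\alpha + \gamma - 3} = 1$, this reconstructs the Brinkman term. In the distributional sense,
\begin{equation}
-\mu_0 \eps^\gamma \Delta \tilde u_\eps + \nabla \tilde p_\eps = -\mu_0 \eps^\gamma \Delta u + \mathcal R u + \nabla p + \mathrm{Res}_\eps,
\end{equation}
where $\mathrm{Res}_\eps$ collects errors from the cutoff of the Stokes profile, the viscous energy on the mesoscopic annuli, and the Bogovskii correction. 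Combined with \eqref{Euler-Darcy}, $\tilde u_\eps$ therefore solves the perforated Navier--Stokes system up to the small inhomogeneity $\mathrm{Res}_\eps$: the Brinkman contribution from the viscous expansion cancels with the one coming from the limit equation. Subtracting the equations for $u_\eps$ and $\tilde u_\eps$ and testing against $r_\eps \in H^1_0(\Omega_\eps)$, the self-convective term vanishes by divergence-freeness, while the remaining convective contribution is, after splitting $\tilde u_\eps = u + (\tilde u_\eps - u)$ and integrating by parts, bounded by $\|\nabla u\|_{L^\infty} \|r_\eps\|_{L^2}^2$ (finite thanks to $u \in H^4 \hookrightarrow W^{1,\infty}$) plus a piece involving $\tilde u_\eps - u$ absorbed into the viscous dissipation $\mu_0 \eps^\gamma \|\nabla r_\eps\|_{L^2}^2$; Gronwall then gives \eqref{error-estimate}.

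The main obstacle will be the quantitative control of $\mathrm{Res}_\eps$ yielding precisely the rates $\eps^{2\alpha - 3} + \eps^{6 - 2\alpha}$. The first term arises from summing the squared $L^2$-defect $\|w_k^\eps - e_k\|_{L^2(B_{d_\eps})}^2$ (scaling like $\eps^{2\alpha} d_\eps$ per cell via the $1/r$ decay of the Stokes profile) over the $\eps^{-3}$ cells. The second, $\eps^{6 - 2\alpha} = \eps^{2\gamma}$, reflects the viscous contribution $\mu_0 \eps^\gamma \int |\nabla \tilde u_\eps|^2$ on each mesoscopic shell together with the Bogovskii corrector. An optimized choice of $d_\eps$ and a careful $L^2$-estimate of the pressure difference $\nabla(\tilde p_\eps - p)$ exploiting the $H^2$-regularity of $\nabla p$, together with the nonlinear absorption argument above where $\nabla \tilde u_\eps$ blows up near particles, constitute the technical heart of the proof.
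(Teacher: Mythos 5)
Your overall strategy is the same as the paper's: compare $u_\eps$ with a corrected field built from truncated cell correctors at an intermediate scale plus a Bogovskii correction, and close a Gronwall estimate in $L^2$. However, two points that you defer are precisely where the proof stands or falls, and as sketched your argument does not close. First, the convective interaction term: after the splitting you describe, one is left with $\int (r_\eps\cdot\nabla \tilde u_\eps)\cdot r_\eps$, whose dangerous part involves $(\nabla W_\eps)\,u$ with $|\nabla W_\eps|\sim \eps^\alpha |x-x_i^\eps|^{-2}$, of size $\eps^{-\alpha}$ near the particles. This cannot be bounded by $\|\nabla u\|_{L^\infty}\|r_\eps\|_{L^2}^2$, and the naive absorption route (H\"older with $\|\nabla W_\eps\|_{L^{3/2}}\sim \eps^{\alpha-2}$ and Sobolev embedding against the dissipation $\eps^\gamma\|\nabla r_\eps\|_{L^2}^2$ with $\gamma=3-\alpha$) only works for $\alpha\le 5/2$, not on the whole range $\alpha\in(3/2,3)$. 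The paper's mechanism is a Hardy-type inequality \eqref{Hardy}, $\||\nabla w^\eps|^{1/2} r_\eps\|_{L^2}^2\lesssim \eta_\eps\|\nabla r_\eps\|_{L^2}^2$, which produces the factor $d_\eps$ (your $d_\eps$, the paper's $\eta_\eps$) and makes absorption possible exactly when $d_\eps\lesssim \eps^\gamma$; this is compatible with $d_\eps\ge\eps^\alpha$ only because $\gamma=3-\alpha<\alpha$. You flag the blow-up of $\nabla\tilde u_\eps$ and the optimization of $d_\eps$ as ``the technical heart'' but give no mechanism that survives for $\alpha$ close to $3$; the same applies to the corrector pressure term $\eps^\gamma\nabla q^\eps\,u$ and to the quantification of the Brinkman reconstruction error (the analogue of the $H^{-1}$ estimate \eqref{M.W^-1,infty} in Lemma \ref{lem:M_eps}), whose size $\eta_\eps^{-1}\eps^{3-\gamma}$ is in fact one source of the rate $\eps^{2\alpha-3}$, rather than the $L^2$-defect bookkeeping you propose (which gives the smaller $\eps^{2\alpha-3}d_\eps$).

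Second, ``subtracting the equations for $u_\eps$ and $\tilde u_\eps$ and testing against $r_\eps$'' is not admissible for a Leray solution: $u_\eps$ is only known to satisfy the energy \emph{inequality} \eqref{energy.inequality}, and $r_\eps$ is not a legitimate test function for the weak formulation of $u_\eps$ (this would amount to an energy equality). The paper instead runs a relative-energy argument: expand $\tfrac12\|r_\eps\|_{L^2}^2$, use the energy inequality for the $\|u_\eps\|_{L^2}^2$ part, and test the weak formulation of $u_\eps$ with the smooth corrected field $\tilde u_\eps$ (and the equation of $\tilde u_\eps$ with $r_\eps$), as in Proposition \ref{pro:v_eps}. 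This is a standard but necessary repair of your step; combined with the missing Hardy-type absorption it is what turns your outline into a proof.
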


\begin{thm}[Subcritical regime] \label{th:Euler}
      Let $\mu_0 > 0$, $\alpha > 3/2, \gamma > 0$ satisfy $3 - \alpha < \gamma \leq \alpha$. Let $T>0$, $u_0 \in H^4(\R^3)$, $f \in C(0,T;H^2(\R^3))$ and $(u,p) \in C^1(0,T;H^4(\R^3)) \times L^\infty(0,T;H^3_\loc(\R^3))$ be a solution to \eqref{Euler}. Moreover, for $0 < \eps <1$ let $u^\eps_0 \in L^2_\sigma(\Omega_\eps)$, $f_\eps \in L^2(0,T;L^2(\Omega_\eps))$
      and let $u_\eps \in L^2(H^1_0(\Omega_\eps)) \cap C(0,T;L^2(\Omega_\eps))$ be  a Leray solution to \eqref{Navier.Stokes.holes}. Then, there exists $M>0$ depends only on the reference particle $\mathcal T$ and, monotonously, on  $T$,  $\|f\|_{L^\infty(0,T;H^2(\R^3))}$, $\|u\|_{C^1(0,T;H^4(\R^3))}$ and $\|\nabla p\|_{L^\infty(0,T;H^2(\R^3))}$, and $C > 0$ which depends additionally on $\mu_0$ such that, if either $\alpha > \gamma$ or $\mu_0 \geq M$, we have for all $t \leq T$ 
  \begin{align} \label{error-estimate.Euler}
       \|(u_\eps - u)(t)\|_{L^2(\Omega_\eps)}^2 &\leq  C \left(\|u_\eps^0 - u_0\|_{L^2(\Omega_\eps)}^2 + \|f_\eps - f\|_{L^2(0,T;L^2(\Omega_\eps))}^2 
    % \right.\\
    %   &\left.
      + \left(\eps^{2 \alpha + 2\gamma-6} + \eps^{2 \alpha  -3}    + \eps^{2\gamma} \right) \right).
  \end{align}
\end{thm}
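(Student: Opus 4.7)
The plan is to proceed in the same spirit as Theorem \ref{th:Euler-Brinkman}: build a divergence-free approximation $u^{\mathrm{app}}_\eps \in L^2(0,T; H^1_0(\Omega_\eps))$ from the smooth limit $u$, derive the equation it satisfies in the perforated domain, and close a relative-energy estimate against $u_\eps$ via Gronwall. The essential difference compared with the critical regime is that the collective friction $\mu_0 \eps^{\alpha+\gamma-3} \mathcal R u$ is no longer a term in the limit equation but a small source term, which is precisely why it reappears as the error contribution $\eps^{2\alpha+2\gamma-6}$ in \eqref{error-estimate.Euler}.

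Concretely, using the Stokes cell solutions $(w_k, q_k)$ from \eqref{w_k} together with a cutoff supported in $\bigcup_i B(x_i^\eps, \eps/2)$ and a Bogovski\u{\i}-type corrector to restore the solenoidal constraint, I would set
\begin{equation*}
u^{\mathrm{app}}_\eps(t,x) := u(t,x) - \sum_{i \in \Z^3} \Phi^\eps_i(t,x),
\end{equation*}
where $\Phi_i^\eps$ is supported in $B(x_i^\eps, \eps/2)$ and equals $u(t,x_i^\eps)$ on $\partial \mathcal T_i^\eps$, built from $u_k(t,x_i^\eps)\, w_k((\cdot - x_i^\eps)/\eps^\alpha)$. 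The $|y|^{-1}$ decay of $w_k - e_k$ and the regularity of $u$ give $\|u^{\mathrm{app}}_\eps - u\|_{L^\infty(0,T;L^2)}^2 \lesssim \eps^{2\alpha-3}$ together with $\mu_0 \eps^\gamma \|\nabla (u^{\mathrm{app}}_\eps - u)\|_{L^2(0,T;L^2)}^2 \lesssim \eps^{\alpha+\gamma-3}$, both of which are dominated by the right-hand side of \eqref{error-estimate.Euler}. A direct computation then shows that $u^{\mathrm{app}}_\eps$ solves, in $\Omega_\eps$,
\begin{equation*}
\partial_t u^{\mathrm{app}}_\eps + u^{\mathrm{app}}_\eps \cdot \nabla u^{\mathrm{app}}_\eps - \mu_0 \eps^\gamma \Delta u^{\mathrm{app}}_\eps + \nabla p^{\mathrm{app}}_\eps = f + r_\eps,
\end{equation*}
where testing $r_\eps$ against $u_\eps - u^{\mathrm{app}}_\eps$ produces exactly the three contributions $\eps^{2\gamma}$ (from the unmatched viscous term $\mu_0 \eps^\gamma \Delta u$), $\eps^{2\alpha+2\gamma-6}$ (from the collective friction $\mu_0 \eps^{\alpha+\gamma-3} \mathcal R u$ that is now subleading) and $\eps^{2\alpha-3}$ (from nonlinear and divergence-correction terms involving the $\Phi_i^\eps$).

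Feeding $u_\eps - u^{\mathrm{app}}_\eps$ as a test function in the weak formulation and combining with the energy inequality \eqref{energy.inequality} then yields a differential inequality for $E(t) := \|u_\eps - u^{\mathrm{app}}_\eps\|_{L^2(\Omega_\eps)}^2$, in which the transport nonlinearity contributes a factor $\|\nabla u\|_{L^\infty} \lesssim \|u\|_{H^4}$ via the Sobolev embedding $H^3 \hookrightarrow W^{1,\infty}$, which is then absorbed into the constant $C$ through Gronwall. The main obstacle is the borderline case $\alpha = \gamma$, for which $\mathrm{Re}^\eps_{\mathrm{part}}$ is only bounded, not small: here the convective piece $u_\eps \cdot \nabla u_\eps$ near each particle is not automatically dominated by the viscous dissipation $\mu_0 \eps^\gamma \|\nabla u_\eps\|^2$, and one needs $\mu_0$ large enough (the threshold $M$ in the statement) so that a Hardy-type inequality on the punctured balls $B(x_i^\eps, \eps/2) \setminus \mathcal T_i^\eps$ allows absorption of the offending term into the dissipation; when $\alpha > \gamma$ this absorption is automatic from the smallness of $\eps^{\alpha-\gamma}$.
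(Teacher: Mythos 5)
Your overall architecture (cell correctors built from \eqref{w_k}, a Bogovski\u{\i} correction to restore the divergence constraint, then a relative-energy estimate closed by Gronwall) is the same as the paper's, but the decisive point of this particular theorem is missing: the scale at which you truncate. You cut the corrector off at the fixed scale $\eps/2$, i.e.\ you use the standard Allaire-type oscillating test functions. In the relative-energy inequality the dangerous contribution of the convection term is $\int (v_\eps\cdot\nabla u^{\mathrm{app}}_\eps)\cdot v_\eps$ with $v_\eps=u_\eps-u^{\mathrm{app}}_\eps$, which contains $\int|\nabla\Phi_i^\eps|\,|v_\eps|^2$; since $v_\eps$ vanishes only on the particles, the Hardy-type estimate (cf.\ \eqref{Hardy}) bounds this by $C\,\eta\,\|u\|_{L^\infty}\|\nabla v_\eps\|_{L^2}^2$, where the constant $\eta$ is the \emph{outer radius of the support of} $\nabla\Phi_i^\eps$ -- in your construction $\eta\sim\eps$ -- and not the particle radius $\eps^\alpha$. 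This term must be absorbed into the dissipation $\mu_0\eps^\gamma\|\nabla v_\eps\|_{L^2}^2$ coming from \eqref{energy.inequality}, which with your cutoff requires $\gamma\le 1$ (and $\mu_0$ large if $\gamma=1$). The theorem, however, allows any $\gamma\le\alpha$, for instance $\alpha=5/2$, $\gamma=2$, where your absorption fails for every small $\eps$ even though $\alpha>\gamma$. Your assertion that ``when $\alpha>\gamma$ this absorption is automatic from the smallness of $\eps^{\alpha-\gamma}$'' is precisely the unproven step: with an $\eps$-scale cutoff the smallness produced by the Hardy inequality is $\eps$, not $\eps^\alpha$, and no factor $\eps^{\alpha-\gamma}$ appears. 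This is exactly why the paper replaces the scale-$\eps$ correctors by correctors truncated at an adjustable intermediate scale $\eta_\eps\in[\eps^\alpha,\eps]$ (Section \ref{sec:w}), later chosen comparable to $\min\{\eps,\mu_0\eps^\gamma\}$; such a choice is possible exactly when $\gamma\le\alpha$, and at the borderline $\gamma=\alpha$ the constraint $\eta_\eps\ge\eps^\alpha$ forbids shrinking $\eta_\eps$ further, which is where (and only where) the assumption $\mu_0\ge M$ is genuinely needed -- not, as you suggest, as a fix for a bounded particle Reynolds number within an $\eps$-scale construction.

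Two further, more minor, problems. First, since you impose $\Phi_i^\eps=u(t,x_i^\eps)$ on $\partial\mathcal T_i^\eps$, your $u^{\mathrm{app}}_\eps$ equals $u(t,x)-u(t,x_i^\eps)\neq0$ on the particle boundaries, so $u_\eps-u^{\mathrm{app}}_\eps\notin H^1_0(\Omega_\eps)$ and cannot be used in the weak formulation without an additional boundary correction; the paper avoids this by taking $w^\eps u$, which vanishes exactly on $\partial\Omega_\eps$. Second, your claim that $\mu_0\eps^\gamma\|\nabla(u^{\mathrm{app}}_\eps-u)\|_{L^2(0,T;L^2)}^2\lesssim\eps^{\alpha+\gamma-3}$ is ``dominated by the right-hand side of \eqref{error-estimate.Euler}'' is false in general (take $\alpha=2$, $\gamma=6/5$: this gives $\eps^{1/5}$ against a right-hand side of order $\eps^{2/5}$); in a correctly organized relative-energy argument this quantity never needs to be compared to the final error, because the viscous cross terms are handled via \eqref{wk.qk.phi}--\eqref{Hardy} and produce the $\eps^{2\gamma}$ contribution instead.
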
 
In the supercritical regime, we remind that we consider the rescaled system \eqref{Navier.Stokes.holes.rescaled}. The corresponding energy inequality reads

\begin{align} \label{energy.inequality.Darcy}
    \frac 1 2 \|u_\eps(t)\|^2_{L^2(\Omega_\eps)} + \eps^{2\gamma + \alpha -3}  \|\nabla u_\eps\|^2_{L^2(0,t;L^2(\Omega_\eps))} \leq \frac 1 2 \|u_0^\eps\|_{L^2(\Omega_\eps)}^2   +\eps^{2 \gamma + 2 \alpha -6}  \int_0^t \int_{\Omega_\eps} f_\eps \cdot u_\eps \dd x \dd s \qquad 
\end{align}
for all $0 \leq t \leq T$.

\begin{thm}[Supercritical regime -- quantitative result] \label{th:Darcy.quantitative}
         Let $\alpha \in (1,3)$ and $0 < \gamma < \min\{3/2,3-\alpha\}$. Let $T >0$ and $f \in C^1(0,T;H^4(\R^3))$ and let $(u,p) \in C^1(0,T;H^4(\R^3)) \times C^1(0,T;H^5_\loc(\R^3)) $ be the unique solution to \eqref{Darcy} (up to constants for the pressure).
     For $\eps > 0$ let $u^\eps_0 \in L^2_\sigma(\Omega_\eps)$  and let $u_\eps$ be a 
 Leray solutions to \eqref{Navier.Stokes.holes.rescaled}.
       Then, there exists $C > 0$ which depends only on  the reference particle $\mathcal T$ and, monotonously, on  $T$,  $\|f\|_{L^\infty(0,T;H^2(\R^3))}$, $\|u\|_{C^1(0,T;H^4(\R^3))}$, $\|\nabla p\|_{L^\infty(0,T;H^2(\R^3))}$ and $\|u_\eps^0\|_{L^2(\Omega_\eps)}$ such that for all $t \leq T$ 
      \begin{align}
                  \|u_\eps - u\|_{L^2((0,T) \times \Omega_\eps)}^2  &\leq C\left( \eps^{6-2\alpha-2 \gamma} \|u_\eps^0 - u_0\|^2_{L^2(\Omega_\eps)} + \|f_\eps - f\|_{L^2((0,T) \times \Omega_\eps)}^2 \right.  \\
                  &\qquad \qquad \qquad \qquad \left.+ \eps^{\frac{6 - 4\gamma}{3}} + \eps^{\alpha - 1} + \eps^{9-3\alpha} + \eps^{12 - 4 \alpha -4 \gamma}\right).
    \end{align}
\end{thm}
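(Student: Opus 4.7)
My plan is a relative-energy argument comparing $u_\eps$ to a corrector-based modification $\tilde u_\eps$ of the Darcy limit $u$ that vanishes on the particles. Using the cell correctors $(w_k,q_k)$ from \eqref{w_k} rescaled to the particle size $\eps^\alpha$, I would localize them via smooth cutoffs to intermediate-scale balls around each $x_i^\eps$, weight by $u_k(t,x_i^\eps)$, and add a local Bogovskii correction to enforce $\dv \tilde u_\eps = 0$. The resulting $\tilde u_\eps \in L^\infty(0,T;H^1_0(\Omega_\eps))$ converges quantitatively to $u$, with residual sizes governed by the 3D decay $|w_k - e_k| = O(1/|x|)$, $|\nabla w_k| = O(1/|x|^2)$ of the Stokes correctors.

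Next, I would compute the residual satisfied by $\tilde u_\eps$ when substituted into \eqref{Navier.Stokes.holes.rescaled}. The key cancellation is between $-\eps^{3-\alpha}\Delta\tilde u_\eps$, which concentrates near each particle and pairs with smooth divergence-free test fields to reproduce the Brinkman force $\mathcal R u(t,x_i^\eps)$ through the identity $\mathcal R_{jk} = \int \nabla w_k:\nabla w_j$ in \eqref{def:R}, and the term $\mathcal R u$ appearing in Darcy's law \eqref{Darcy}. The leftover residuals arise from: (i) cutoff and overlap errors on the transition annuli between cells; (ii) Taylor-expansion errors from replacing $u(t,x)$ by $u(t,x_i^\eps)$, controlled using $u \in C^1(0,T;H^4(\R^3))$; and (iii) the rescaled inertial terms $\eps^{6-2\alpha-2\gamma}(\partial_t\tilde u_\eps + \tilde u_\eps\cdot\nabla\tilde u_\eps)$, which are small thanks to the prefactor. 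Careful bookkeeping of these contributions produces the rates $\eps^{\alpha-1}$ and $\eps^{9-3\alpha}$ appearing in the final estimate.

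Setting $v_\eps = u_\eps - \tilde u_\eps$ and testing the difference of the two PDEs against $v_\eps$ then yields
\[
\tfrac{\eps^{6-2\alpha-2\gamma}}{2}\|v_\eps(t)\|_{L^2}^2 + \eps^{3-\alpha}\int_0^t \|\nabla v_\eps\|_{L^2}^2 \dd s \leq (\text{data and residual errors}) - \eps^{6-2\alpha-2\gamma}\int_0^t (v_\eps\cdot\nabla)\tilde u_\eps\cdot v_\eps\,\dd s,
\]
where the self-convection $(v_\eps\cdot\nabla)v_\eps\cdot v_\eps$ vanishes by $\dv v_\eps = 0$. The main obstacle will be the remaining mixed term: since $\|\nabla\tilde u_\eps\|_\infty$ blows up like $\eps^{-\alpha}$ near the particles, a naive bound is useless. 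I would split $\tilde u_\eps = u + (\tilde u_\eps - u)$: the smooth part contributes only a harmless Grönwall-type term in $\|v_\eps\|_{L^2}^2$, while the rough corrector part is controlled via Hölder and the 3D Gagliardo--Nirenberg interpolation $\|v_\eps\|_{L^3}^2 \lesssim \|v_\eps\|_{L^2}\|\nabla v_\eps\|_{L^2}$, then absorbed into the dissipation $\eps^{3-\alpha}\|\nabla v_\eps\|_{L^2}^2$ by Young's inequality. This generates precisely the rate $\eps^{(6-4\gamma)/3}$, reflecting that the particle Reynolds number $\eps^{3-2\gamma}$ is small exactly when $\gamma < 3/2$. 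Finally, converting the dissipation to control of $\int_0^T\|v_\eps\|_{L^2}^2$ via the perforated-domain Poincaré inequality $\|v\|_{L^2(\Omega_\eps)}^2 \lesssim \eps^{3-\alpha}\|\nabla v\|_{L^2(\Omega_\eps)}^2$ produces the initial-data weight $\eps^{6-2\alpha-2\gamma}$ and closes the proof.
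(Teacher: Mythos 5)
Your overall architecture is the same as the paper's: a corrector-modified relative energy estimate, comparing $u_\eps$ with a divergence-corrected truncation of $u$ built from the Stokes cell problems \eqref{w_k}, extracting the Brinkman term $\mathcal R u$ from the concentrated viscous stress, and converting the dissipation into $L^2_tL^2_x$ control via the perforated-domain Poincar\'e inequality \eqref{Poincare.Omega_eps}. (Two side remarks: the paper applies $w^\eps$ to the full field $u$ rather than to cell-centered values, and since $u_\eps$ is only a Leray solution you cannot literally ``test the difference of the two PDEs with $v_\eps$''; one must combine the energy inequality \eqref{energy.inequality.Darcy} with the weak formulation tested against the corrected field, as in the paper. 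These are fixable technicalities.)

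The genuine gap is your treatment of the mixed convection term $\eps^{6-2\alpha-2\gamma}\int (v_\eps\cdot\nabla)(\tilde u_\eps-u)\cdot v_\eps$ by H\"older and Gagliardo--Nirenberg. The rough part of $\nabla(\tilde u_\eps-u)$ is essentially $\nabla w^\eps$, whose $L^3$ norm on the support of $u$ is of order $\eps^{-1}$ (each of the $\sim\eps^{-3}$ cells contributes $O(1)$ to $\|\nabla w^\eps\|_{L^3}^3$). With $\|v_\eps\|_{L^3}^2\lesssim\|v_\eps\|_{L^2}\|\nabla v_\eps\|_{L^2}$ and Poincar\'e this gives at best a bound of order $\eps^{(13-5\alpha-4\gamma)/2}\|\nabla v_\eps\|_{L^2}^2$, which can be absorbed into the dissipation $\eps^{3-\alpha}\|\nabla v_\eps\|_{L^2}^2$ only if $3\alpha+4\gamma\leq 7$; this fails in part of the admissible range (e.g.\ $\alpha=2.5$, $\gamma=0.4$), and Gronwall is unavailable because the $L^\infty_tL^2_x$ term carries the degenerate weight $\eps^{6-2\alpha-2\gamma}$. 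What is missing is the Hardy-type estimate \eqref{Hardy}, which exploits that $v_\eps$ vanishes on the particles, exactly where $\nabla w^\eps$ is large, and yields $\int|\nabla w^\eps||v_\eps|^2\lesssim\eta_\eps\|\nabla v_\eps\|_{L^2}^2$; this is absorbable for every admissible $(\alpha,\gamma)$ once the truncation scale $\eta_\eps=\eps^\beta$ is chosen suitably. Relatedly, your attribution of the rate $\eps^{(6-4\gamma)/3}$ to the Gagliardo--Nirenberg absorption is not substantiated: in the paper this rate arises from optimizing the intermediate scale $\beta$, balancing the corrector error $\eps^{\alpha-\beta}$ from \eqref{M.W^-1,infty} against the inertial error $\eps^{6-2\alpha-4\gamma+2\beta}$ coming from $(\check u_\eps\cdot\nabla\check u_\eps)\cdot u_\eps$, with $\beta=\min\{1,\alpha-\tfrac{6-4\gamma}{3}\}$ also producing the $\eps^{\alpha-1}$ term; your proposal never treats the truncation scale as a free parameter to optimize, so the stated rates cannot be recovered as written.
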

\begin{rem}
\begin{itemize}
    \item The three theorems above imply in particular that for any sequence $\eps \to 0$ with $\|u_\eps^0 - u_0\|_{L^2(\R^3)} \to 0$ (respectively $\eps^{6-2\alpha-2 \gamma} \|u_\eps^0 - u_0\|^2_{L^2(\Omega_\eps)} \to 0$), and $f_\eps \to f$ in $L^2(0,T;L^2(\R^3))$ we have $u_\eps \to u$ in $L^\infty(0,T;L^2(\R^3))$ (respectively in $L^2(0,T;L^2(\R^3))$). Here, $f_\eps$, $u_\eps^0$ and $u_\eps$  are to be understood as defined in $\R^3$ through extension by $0$.
    Note that one may choose $f_\eps = f$ in $\Omega_\eps$. 
    Moreover, one may choose $u_\eps^0 = w^\eps u_0$ with $w^\eps$ as in Section $\ref{sec:w}$. Then, estimate \eqref{w_eps-1} guarantees $\|u_\eps^0 - u_0\|_{L^2(\R^3)} \to 0$ for any choice of the parameter $\eps^\alpha \leq \eta_\eps \leq \eps$ that $w^\eps$ depends on. Optimizing $\eta_\eps$ yields $\|u_\eps^0 - u_0\|^2_{L^2(\R^3)} \leq C \eps^{3\alpha -3}$.
    
%     \item The errors terms that explicitly depend on $\eps$ account for different defects. 
% For example, in Theorem \ref{th:Euler-Brinkman}, the error $\eps^{\alpha -3/2}$ accounts for a small local Reynolds number at the particles whereas $\eps^{6-2\alpha}$ is the  small viscosity.
    
    \item The regularity assumptions on $u$ could probably be weakened but we do not pursue to optimize here.
    
    \item In the supercritical regime, we do not obtain pointwise estimates in time. Indeed, there are boundary layers in time  which prevent pointwise estimates under the stated assumptions. These boundary layers are due to the initial datum $u_\eps$ but also due to possible jumps in time of the force $f_\eps$. 
\end{itemize}
\end{rem}

\begin{thm} [Supercritical regime -- qualitative result] \label{th:Darcy}
      Assume  $\alpha \in (1,3)$, $0 < \gamma < \min\{3/2,3-\alpha\}$ .
     For $T>0$ and  $\eps > 0$, assume $u^\eps_0 \in L^2(\Omega_\eps)$ such that $ \eps^{3 - \alpha -\gamma} \|u_0^\eps\|_{L^2(\Omega_\eps)}$ is uniformly bounded and $f_\eps \in L^2(0,T;L^2(\R^3))$ converges weakly to some $f$ in $L^2(0,T;L^2(\R^3))$. Let  $u_\eps \in L^2(H^1_0(\Omega_\eps)) \cap C(0,T;L^2(\Omega_\eps))$ be  a Leray solution to \eqref{Navier.Stokes.holes.rescaled}. Then, $\tilde u_\eps \wto u$ in $L^2(0,T;L^2(\R^3))$, where  $u$ is the unique weak solution in $L^2(0,T,L^2(\R^3))$ to \eqref{Darcy} and where $\tilde u_\eps$ is the extensions of $u_\eps$ to $\R^3$ by $\tilde u_\eps = 0$ in $\R^3 \setminus \Omega_\eps$.
     \end{thm}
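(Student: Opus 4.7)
The plan is the standard homogenization scheme: (i) uniform $L^2$ a priori bound on the zero-extensions $\tilde u_\eps$; (ii) extraction of a weakly convergent subsequence; (iii) identification of the limit as a weak solution of Darcy's law via oscillating test functions; (iv) upgrade to full-sequence convergence by uniqueness.

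For (i), I would combine the energy inequality \eqref{energy.inequality.Darcy} with the Poincar\'e inequality for $H^1_0(\Omega_\eps)$ in the perforated domain of the form $\|v\|_{L^2(\Omega_\eps)}^2 \leq C \eps^{3-\alpha}\|\nabla v\|_{L^2(\Omega_\eps)}^2$, which reflects the collective capacity $\eps^{\alpha-3}$ of the particle array. The viscous dissipation on the left-hand side then controls $\eps^{2\gamma+2\alpha-6}\int_0^t \|u_\eps\|_{L^2}^2$; absorbing half of this against the force term via Young's inequality yields
\begin{align}
	\int_0^T \|u_\eps\|_{L^2(\Omega_\eps)}^2 \dd t \leq C \bigl( \eps^{6-2\gamma-2\alpha}\|u_0^\eps\|_{L^2(\Omega_\eps)}^2 + \|f_\eps\|^2_{L^2((0,T)\times \R^3)} \bigr),
\end{align}
and both terms on the right are uniformly bounded by the hypotheses. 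Banach--Alaoglu then delivers a subsequence with $\tilde u_\eps \wto u$ in $L^2((0,T)\times\R^3)$, and $\dv u = 0$ follows directly from $\dv \tilde u_\eps = 0$ in $\R^3$.

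For step (iii), for each divergence-free $\varphi \in C_c^\infty((0,T)\times\R^3;\R^3)$ I would test the weak form of \eqref{Navier.Stokes.holes.rescaled} against an oscillating test function $\varphi_\eps$ built from $\varphi$ and the corrector $w^\eps$ of Section~\ref{sec:w}, with a small additional correction restoring divergence-freeness on each perforated cell. The inertial contributions carry the small prefactor $\eps^{6-2\gamma-2\alpha}$: integrating by parts in time for $\partial_t u_\eps$ and in space for the convective term leaves quantities bounded by $\|u_\eps\|_{L^2((0,T)\times\Omega_\eps)}$ and by derivatives of $\varphi_\eps$, which can be made negligible by tuning the intermediate scale $\eta_\eps$ in the construction of $w^\eps$. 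The forcing term converges to $\int f\cdot\varphi$ because $w^\eps \to 1$ strongly in $L^2$ while $f_\eps \wto f$ weakly. The viscous term $\eps^{3-\alpha}\int \nabla u_\eps : \nabla \varphi_\eps$ is the Brinkman contribution: transferring the derivatives onto the corrector, the distributional convergence $\eps^{3-\alpha}(-\Delta w^\eps + \nabla q^\eps) \to \mathcal R$ encoded in the cell problem \eqref{w_k}--\eqref{def:R} identifies its limit as $\int \mathcal R u \cdot \varphi$. Collecting the contributions shows that $u$ satisfies $\mathcal R u + \nabla p = f$ in the distributional sense, hence is a weak solution of \eqref{Darcy}.

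The main obstacle is this passage to the limit in the Brinkman term, which is precisely the central homogenization estimate underlying Theorem~\ref{th:Darcy.quantitative}; I would reuse the properties of $w^\eps$ established in Section~\ref{sec:w} rather than redo the analysis from scratch. A secondary technical point is that the convective prefactor $\eps^{6-2\gamma-2\alpha}$ must outweigh any $L^\infty$-growth of $\nabla \varphi_\eps$, but the assumptions $\alpha+\gamma<3$ and $\gamma < 3/2$ leave enough room after a single integration by parts. Finally, for step (iv), weak $L^2$-solutions of the stationary system \eqref{Darcy} are unique: testing the difference of two solutions against itself and using positive definiteness of $\mathcal R$ forces coincidence. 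This removes the subsequence extraction and delivers $\tilde u_\eps \wto u$ for the whole family.
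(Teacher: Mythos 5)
Your proposal follows essentially the same route as the paper's proof: a uniform $L^2$ bound from the energy inequality \eqref{energy.inequality.Darcy} combined with the Poincar\'e inequality \eqref{Poincare.Omega_eps}, weak compactness, testing \eqref{Navier.Stokes.holes.rescaled} with $w^\eps\varphi - \mathcal B_\eps(\varphi)$ for divergence-free $\varphi$, convergence of the force term via $w^\eps \to \Id$, negligibility of the inertial terms thanks to the prefactor $\eps^{6-2\alpha-2\gamma}$ and the tunable scale $\eta_\eps = \eps^\beta$, identification of the Brinkman/Darcy term through the corrector estimates of Lemma \ref{lem:M_eps}, and finally uniqueness for \eqref{Darcy} to get full-sequence convergence. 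This matches the paper's argument in both structure and key estimates, so no substantive differences to report.
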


\subsection{Previous results}

The vanishing viscosity limit is a classical problem in the study of incompressible fluids, we refer to \cite{MaekawaMazzucato18} for a review on the topic.
In bounded domains with no-slip boundary conditions, the limiting behavior is not well-understood due to the onset of boundary layers.
This is the reason why we consider the whole space in this paper.
 
In dimensions two and three, the vanishing viscosity limit has been studied in \cite{IftimieFilhoLopes09} in the presence of a single shrinking body. The convergence to the Euler equations has been established provided that the local Reynolds number is sufficiently small i.e. the same condition $a_\eps \leq  c \mu_\eps \ll 1$, where $a_\eps$ and $\mu_\eps$ denote the particle diameter and fluid viscosity, respectively, and $c$ is a sufficiently small constant (depending on the initial data, time, and the reference particle). 

\medskip 

There is a vast literature on homogenization in perforated domains. Modeling the fluid velocity $u_\eps$ by the stationary Stokes equations, Darcy's law has been obtained in
\cite{Tartar80} in the case of particle of the same size as the inter-particle distance, i.e. $\alpha =1$.
Later, Allaire \cite{Allaire90a,Allaire90b}
proved homogenization results for the Stokes equations for all ranges of $\alpha > 1$, identifying Darcy's law for $\alpha \in (1,3)$, the Stokes-Brinkman equations for $\alpha =3$ and the Stokes equations for $\alpha > 3$.
Allaire's results cover all space dimensions $d \geq 2$ with appropriate adaptations of the ranges of $\alpha$ for $d \geq 4$. In the two-dimensional case, the critical regime corresponds to particle diameters $a_\eps$ such that $\eps^{-2} \log a_\eps \sim 1$.
By compactness, Allaire's results also apply to the stationary Navier-Stokes equations (in dimensions $d \leq 4$).

The results of Allaire have been refined in a number of works, for example considering more general distributions of particles, non-homogeneous Dirichlet boundary conditions, the study of higher order approximations and fluctuations. We refer to the recent results \cite{DesvillettesGolseRicci08, HillairetMoussaSueur19, GiuntiHoefer19, CarrapatosoHillairet20, HoeferJansen20, Giunti21} and the references therein.

\medskip

The homogenization limits for the full instationary Navier-Stokes for \emph{fixed viscosity} correspond to the one of the stationary Stokes equations and are displayed in Figure \ref{fig.regimes}.
Formally they are obtained by setting $\gamma=0$ in \eqref{Navier.Stokes.approx} and taking the limit $\eps \to 0$. 
The critical regime, $\alpha =3$, leading to  the Navier-Stokes-Brinkman equations, has  been considered by Feireisl, Ne\v{c}asov\'a and Namlyeyeva in \cite{Feireisl2016}, whereas the subcritical case $\alpha > 3$ and the supercritical case $\alpha \in (1,3)$ has been treated recently by Lu and Yang in \cite{LuYang23}. 

The case $\alpha = 1$, including the full range of vanishing viscosities $\gamma \in [0,3/2)$ has been treated by Mikeli\'c \cite{Mikelic91}.

We emphasize that the Darcy's law in \cite{LuYang23} and \cite{Allaire90b} is  exactly the same as \eqref{Darcy} whereas the Darcy's law in \cite{Tartar80} and \cite{Mikelic91} differs quantitatively, in terms of a different resistance tensor $\mathcal R_{\mathrm{per}}$ which is obtained analogously as $\mathcal R$ from \eqref{def:R} but by solving the Stokes equations in the torus instead of the whole space.
The reason for this difference is that in the case $\alpha = 1$ the particle diameter is comparable to the interparticle distance. Therefore, the superposition of friction forces through single particle problems in the whole space (cf. \eqref{w_k}) must be replaced by studying the collective forces through the problem with periodic boundary conditions. Mathematically, the analysis of the case $\alpha =1$ is somewhat easier as it only involves two lengthscales, the microscopic lengthscale $\eps$ and the macroscopic lengthscale. Since the study of the case $\alpha =1$ requires different corrector problems and is rather well understood, we restrict our attention to $\alpha > 1$ in the present paper.

\medskip

Reflecting its importance for applications, there are several works concerning the derivation of non-linear Darcy's laws, especially the Darcy-Forchheimer equations.
They seem to focus on the case $\alpha =1$, where nonlinear effects are expected to become important for $\gamma \geq 3/2$. 
Most of these works do not contain rigorous proofs, we refer to \cite{BalhoffMikelicWheeler10} for an overview of the literature. 
Concerning rigorous results,  Mikeli\'c \cite{Mikelic95} and Maru{\v{s}}i{\'c}-Paloka and Mikeli\'c \cite{Marusic-PalokaMikelic00} tackled the critical case $\alpha = 1$, $\gamma = 3/2$  in dimensions two and three starting from the stationary Navier-Stokes equations. 
%This case is critical (in all space dimensions) in the sense that the particle Reynolds number $\Re^\eps_{part}$ is of order $1$.
 The obtained limit system is a nonlinear nonlocal Darcy type equation. 
Moreover, in the subcritical case, $\alpha=1$, $\gamma < 3/2$, Bourgeat, Maru{\v{s}}i{\'c}-Paloka and Mikeli\'c \cite{BourgeatMarusicPalokaMikelic95} justified nonlinear versions of Darcy's law as higher order corrections to the linear law.

\smallskip

We also mention that the homogenization of the instationary Stokes equations with vanishing viscosity has been studied by Allaire \cite{Allaire92} for $\alpha =1$. In this case, the critical scaling (in any space dimension) is $\gamma =2$ and a Darcy's law with memory effect is obtained as limit system.

\medskip
The only previous result the author is aware of concerning the homogenization of the Navier-Stokes equations with vanishing viscosities when the particle diameters are much smaller than the interparticle distance ($\alpha > 1$) is due to Lacave and Mazzucato \cite{LacaveMazzucato16}.  
In dimension two, they recover the unperturbed Euler equations under assumptions on the particle sizes, distances and the viscosity, which guarantuee that the particle Reynolds number is sufficiently small and that  the particles do not exert a significant collective force on the fluid (subcritical regime).
%if the viscosity $\mu_\eps$ and particle diameter $a_\eps$ satisfies $a_\eps \leq  c \mu_\eps \ll 1$ for some $c$ sufficiently small and ???. The condition $a_\eps \lesssim \mu_\eps$ guarantees that the particle Reynolds number  $\Re^\eps_{part}$ is sufficiently small whereas the condition ??? ensures to stay sufficiently far away from the critical regime where the particles are supposed to affect the fluid flow in the limit (cf. \cite{Allaire90b}).
%\rhcomment{Read proof of Lacave, Mazzucato in more Detail! Is it really true that they could get closer to the critical regime?}

\medskip

\subsection{Elements of the proof} \label{sec:strategy}

The proof of the (quantitative) main results is based on an energy argument to estimate $u_\eps - u$ which is, at its core, classical in the study of vanishing viscosity limits.  However, similarly as in \cite{IftimieFilhoLopes09} and \cite{LacaveMazzucato16},  we face the problem, that the limit fluid velocity $u$ does not vanish inside of the particles and thus $u$ is not an admissible testfunction for the PDE of $u_\eps$.
As in  \cite{IftimieFilhoLopes09} and \cite{LacaveMazzucato16}, we therefore consider functions $\hat u_\eps$ obtained from $u$ by a suitable truncation.
In \cite{IftimieFilhoLopes09}, the truncation is performed on the level of the stream function (respectively the vector potential in three dimensions). In \cite{LacaveMazzucato16}, 
the fluid velocity itself is truncated, i.e.
\begin{align}
	\hat u_\eps = \phi_\eps u + h_\eps,
\end{align}
where $h_\eps$ is a suitable Bogovskii type correction  such that $\hat u_\eps$ is divergence free.

As in \cite{LacaveMazzucato16}, we perform the truncation on the level of the fluid velocity itself. However, we need to be more careful, since the truncation needs to contain information of the boundary layers at the particles that produce the Brinkman term in the limit.
Thus, instead of the scalar function $\phi_\eps$ in \cite{LacaveMazzucato16} that truncate in a $\eps^\alpha$ neighborhood around the particles,  we choose a variant of the matrix-valued oscillating testfunction $w^\eps$ used by Allaire \cite{Allaire90a,Allaire90b} that are build on the solutions to the resistance problem \eqref{w_k}.

These functions $w^\eps$ from  \cite{Allaire90a,Allaire90b} (which go back to corresponding functions in \cite{Tartar80} and similar functions for the Poisson equations used by Cioranescu and Murat in \cite{CioranescuMurat82a}) have been used  with some modifications in many related works, see e.g. \cite{GiuntiHoefer19, LuYang23}.
However, $w^\eps$ truncates on an $\eps$-neighborhood around the particles, and therefore we could only use them directly in the present context provided the Reynolds number on the $\eps$-lengthscale is small. This is the case if
$\gamma < 1$ in the (sub-)critical regime and $\gamma <2 - \alpha/2$ in the supercritical regime. 
To overcome this restriction, we modify the 
testfunctions of Allaire, to truncate on a lengthscale $\eta_\eps$, $\eps^\alpha \leq \eta_\eps \leq \eps$. 
Aside from estimates analogous to their standard versions, we then use a Hardy-type estimate in order to control some error-terms arising from the nonlinear convection term. 

\subsection{Some possible generalizations and open problems}

In this paper, we focus on  periodic distributions of identical particles
for the sake of the clarity of the presentation.
The methods of proof do not rely on periodicity, though, and presumably apply to more general settings.

From the viewpoint of applications to suspensions, it would also be interesting to study  non-homogeneous Dirichlet boundary conditions, i.e. $u_\eps = V_i$ on $\partial T^\eps_i$ which have been treated for the corresponding model without vanishing viscosity in \cite{DesvillettesGolseRicci08, Feireisl2016}. 

\medskip

As in many related works, we focus here on the three-dimensional case. Extensions to two dimensions are possible with the necessary modifications similar as in \cite{Allaire90a, Allaire90b}. As mentioned above, parts of the subcritical regime
is treated in \cite{LacaveMazzucato16}. There is one important difference between the two- and three-dimensional case, however, that seems to make it more difficult to analyze all the cases in dimensions two where the particle Reynolds number tends to zero. Namely, in three dimensions, the Stokes resistance of a particle of size $a_\eps$ in the whole space is well approximated by solving Stokes problems in an $\eta_\eps$-neighborhood of the particle, for any  lengthscale $\eta_\eps$ with $\eta_\eps \gg a_\eps$. This allows us to consider the intermediate scale $\eta_\eps$ as outlined in the previous subsection. 
In two dimensions, however, just like for capacities, only \emph{relative} Stokes resistances are meaningful. As observed in \cite{Allaire90a, Allaire90b}, it turns out that the relative resistance in a cell of order of the inter-particle distance $\eps$  is the correct object to consider in order to study the collective effect of the particles.\footnote{To be more precise, since the relative Stokes resistance scales like $|\log(\eta_\eps/a_\eps)|^{-1}$ in two dimensions,  it does not matter whether one chooses $\eta_\eps = \eps$ or $\eta_\eps = \eps^\beta$. However, one should allow $a_\eps$ to be much smaller than powers of $\eps$ in order to include the critical case $-\eps^2 \log a_\eps  \sim 1$.}
Therefore, the  use of an intermediate lengthscale $\eta_\eps$ does not seem suitable in $2$ dimensions, at least not in the critical and supercritical regimes. As discussed above, this would restrict  to assuming that the Reynolds number on the scale $\eps$ is of order one, in order that the (accordingly modified) proof given in this paper still works.

\medskip

It would be of great interest to understand the regimes where the particle Reynolds number $\mathrm{Re}^\eps_{\mathrm{part}}$ is not tending to zero, i.e. $\gamma \geq \max\{\alpha,3/2\}$, displayed in orange in Figure \ref{fig.regimes}.
However, as discussed above, the case when the particle Reynolds number is large is not even understood in the case of  a single shrinking particle.
In the case where the particle Reynolds number is small but fixed, we proved that one still obtains the Euler equations in the subcritical regime. One could still expect convergence to the Euler equations in the subcritical regime. In the critical and supercritical regimes,  one could expect the onset of nonlinear behavior similar to the one obtained in \cite{Mikelic95,Marusic-PalokaMikelic00} at $\gamma = 3/2$.

\subsection{Outline of the rest of the paper}

The rest of the paper is organized as follows.

In Section \ref{sec:w}, we define the correctors $w^\eps$ and prove some useful estimates on them. Mostly, these are standard adaptions of previously established estimates.

Section \ref{sec:proof.main} contains the proofs of the main results. In Section \ref{sec:proof.subcritical} we give the proofs of Theorem \ref{th:Euler-Brinkman} and Theorem \ref{th:Euler}, which are largely analogous.

Section \ref{sec:Darcy}  contains the proof of Theorem \ref{th:Darcy} and Theorem  \ref{th:Darcy.quantitative}. 
The proof of Theorem \ref{th:Darcy.quantitative} is very similar to those of Theorems \ref{th:Euler-Brinkman} and \ref{th:Euler}. For the proof of Theorem \ref{th:Darcy},  we first use a well-known Poincaré inequality in the perforated domain (see Proposition \ref{prop:Poincare}) to get a uniform a-priori estimate of $u_\eps$ in $L^2(0,T;L^2(\R^3))$. We use a classical duality argument that allows us to pass to the limit in the weak formulation of the PDE by applying the correctors $w^\eps$ to smooth  testfunctions instead of the solution $u$ of the limit problem as in the proof of the quantitative results.

\section{Corrector estimates} \label{sec:w}

Throughout this section, we write $A \lesssim B$ for $A,B \in \R$ when $A \leq C B$ for some constant $C$ that depends only on the reference particle $\mathcal T$ and possibly the exponent $p$ of some Sobolev space involved in the estimate.

\begin{figure}[ht] 
 \begin{center}
\begin{tikzpicture} [scale = 0.08]
\filldraw[fill = orange!35] (-37.8,-38.5) rectangle ++(80,80);
\filldraw[fill = red!35] (2.2,1.5) circle (30);
\filldraw[fill = blue!35] (2.2,1.5) circle (15);
\filldraw[fill={rgb:black,1;white,2}] (0,0) to [closed, curve through = {(1.2,2) (0.4,3.3) (1.9,4.8)  (3.9,4.2)(3.2,2.4) (4,0) (3,-1.4) (0.3,-1.2)}] (0,0);
 
\draw (2.4, 4.9) -- (45.0, 4.85);
 \draw (2.0, -1.8) -- (45.0, -1.75);
 \draw [<-] (45.0, 4.85) -- (45.0, 4);
  \draw [->] (45.0, -0.7) -- (45.0, -1.75);
 \node (C) at (45.0,1.7) {\Large $\varepsilon^{\alpha}$};
 
 \draw (2.2, 31.5) -- (58.0, 31.5);
 \draw (2.2, -28.5) -- (58.0, -28.5);
  \draw [<-] (58.0, 31.5) -- (58.0, 5);
 \draw [->] (58.0, -1.6) -- (58.0, -28.5);
 \node (D) at (58.0,1.7) {\Large $\eta_\eps$};
 
 \draw [<-] (52.0, 16.5) -- (52.0, 6);
  \draw [->] (52.0, -2.7) -- (52.0, -13.5);
 \draw (2.2, 16.5) -- (52.0, 16.5);
 \draw (2.2, -13.5) -- (52.0, -13.5);
 \node (E ) at (52.0,1.7) {\Large $\frac{\eta_\eps}2$};
 
 \draw (2.2, 41.5) -- (63.0, 41.5);
 \draw (2.2, -38.5) -- (63.0, -38.5);
  \draw [<-] (63.0, 41.5) -- (63.0, 5);
 \draw [->] (63.0, -1.6) -- (63.0, -38.5);
 \node (D) at (63.0,1.7) {\Large $\varepsilon$};
 
 \filldraw[fill = orange!35] (68,-26) rectangle ++(5,8);
 \filldraw[fill = red!35] (68,-16) rectangle ++(5,8);
 \filldraw[fill = blue!35] (68,-6) rectangle ++(5,8);
 \filldraw[fill={rgb:black,1;white,2}] (68,4) rectangle ++(5,8);
 
 \node (F) at (77.0,-23) {\Large  $K_i^{\varepsilon}$};
 \node (G) at (77.0,-13) {{\Large $D_i^{\varepsilon}$}};
 \node  (H) at (77.0,-3) {{\Large $C_i^{\varepsilon}$}};
 \node(I) at (77.0,7) {{\Large $\mathcal T_i^{\varepsilon}$}};

\end{tikzpicture}
\end{center}
\caption{Decomposition of cell the ${Q}_{i}^{\eps}$}
\label{figure cell decomposition} 
\end{figure}
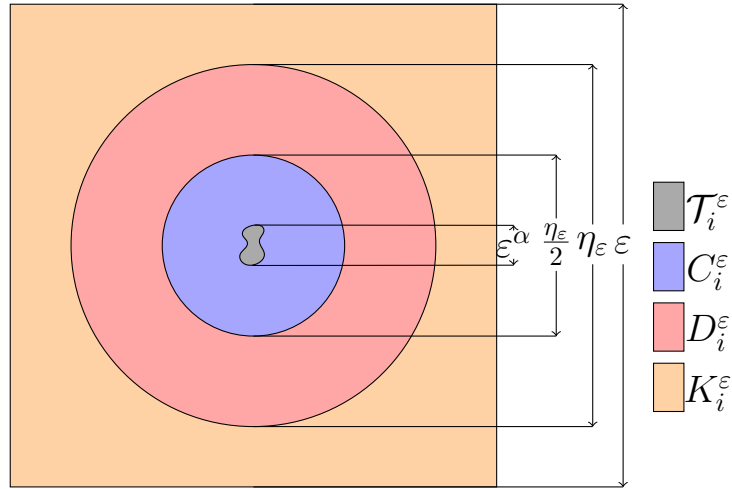

Let $\eps^\alpha \leq \eta_\eps \leq \eps$.
  We denote by $Q_i^\eps$ the open cubes of length $\eps$ centered at $x_i^\eps$ that (essentially) cover $\R^3$. We split each cube $Q_i^\eps$  into four areas,
  displayed in Figure \ref{figure cell decomposition},
\begin{align}
    Q_i^\eps &= \mathcal T_i^\eps \cup C_i^\eps \cup D_i^\eps \cup K_i^\eps,  \\
    C_i^\eps &:= B_{\frac{\eta_\eps}4}(x_i^\eps) \setminus \mathcal T_i^\eps,  \\
    D_i^\eps &:= B_{\frac{\eta_\eps}2}(x_i^\eps) \setminus B_{\frac{\eta_\eps}4}(x_i^\eps), \\
    K_i^\eps &:= Q_i^\eps \setminus B_{\frac{\eta_\eps}2}(x_i^\eps).
\end{align}
 Then, recalling the definition of $(w_k,q_k)$ from \eqref{w_k},  we define $w^\eps_k,q_k^\eps$ as the $\eps$-periodic functions
that satisfy $(w_k^\eps,q_k^\eps) \in W^{1,\infty}_0(\Omega_\eps) \times L^\infty(\Omega_\eps)$, and, in $Q_i^\eps$
\begin{align} \label{w^eps}
 \begin{array}{rl}
    w_{k}^\eps(x) = e_k - w_k\left(\frac{x-x_i^\eps}{\eps^\alpha}\right), \quad q_k^\eps(x) = - \eps^{-\alpha} q_k\left(\frac{x-x_i^\eps}{\eps^\alpha}\right) &\qquad \text{in } C_i^\eps, \\
    - \Delta w_{k}^\eps(x) + \nabla q^\eps = 0,  \quad \dv w_k^\eps = 0 &\qquad \text{in } D_i^\eps, \\
    w_k^\eps = e_k, \quad  q_k^\eps = 0 &\qquad \text{in } K_i^\eps. 
\end{array}
\end{align}
Here, $e_k$ denotes the $k$-th unit vector of the standard basis of $\R^3$. Note that the Stokes equations in $D_i^\eps$ are complemented with inhomogeneous no slip boundary conditions due to the requirement $w_k^\eps \in W^{1,\infty}_0(\Omega_\eps)$.
We will write $w^\eps$ for the matrix-valued function with columns $w_k^\eps$, and $q^\eps$ for the (row-)vector with entries $q_k^\eps$.
We summarize properties of $w^\eps$ in the following lemmas. Some of the estimates are very similar to the ones given in \cite{Allaire90a,Allaire90b} and other works.

\begin{lem} \label{lem:w^eps}
    The functions $w^\eps$, $q^\eps$ satisfy
    \begin{enumerate}[(i)]
    \item \label{item:infty} $w^\eps \in W^{1,\infty}_0(\Omega_\eps)$, $q^\eps \in L^\infty(\Omega_\eps)$, $\dv w^\eps_k = 0$ for $k=1,2,3$ and 
    \begin{align} \label{nabla.w.infty}
   \|w^\eps\|_{L^\infty(\R^3)} + \eps^{\alpha} \left( \|\nabla w^\eps\|_{L^\infty(\R^3)} + \|q^\eps\|_{L^\infty(\R^3)}  \right) \lesssim 1.
\end{align}

\item \label{item:L^2}  For all compact sets $K \subset \R^3$, we have $w^\eps \to \Id$ strongly in $L^2(K)$. 
Moreover, for all $3/2 < p < 3$ and all $\varphi \in W^{2,p}(\R^3)$
\begin{align} \label{w_eps-1}
    \|\varphi (\Id- w^\eps)\|_{L^p(\R^3)} \lesssim  \eta_\eps^{\frac 3 p - 1} \eps^{\alpha   - \frac{3}p} \|\varphi\|_{W^{2,p}(\R^3)}.
\end{align}
Furthermore, 
\begin{align} \label{w_eps-1.p=3}
     \|\varphi (\Id- w^\eps)\|_{L^3(\R^3)} \lesssim \eps^{\alpha - 1} |\log \eps|^{\frac 13 } \|\varphi\|_{W^{2,3}(\R^3)} &\qquad \text{for all } \varphi \in W^{2,3}(\R^3), 
    \\
      \|\varphi \nabla w^\eps \|_{L^2(\R^3)} +  \|\varphi q^\eps \|_{L^2(\R^3)} \lesssim \eps^{\frac{\alpha -3}{2}} \|\varphi\|_{H^2(\R^3)} &\qquad \text{for all } \varphi \in H^2(\R^3), \label{wk.qk.phi} \\
      \||\nabla w^\eps|^{\frac 1 2}\varphi  \|_{L^2(\R^3)} + \| |q^\eps|^{\frac 1 2} \varphi \|_{L^2(\R^3)} \lesssim \eta_\eps^{\frac 1 2}  \eps^{\frac{\alpha -3}{2}} \|\varphi\|_{H^2(\R^3)} &\qquad \text{for all } \varphi \in H^2(\R^3) \label{wk.qk.phi.1}. 
\end{align}

\item \label{item:Hardy} For all $\varphi \in H_0^1(\Omega_\eps)$
\begin{align} \label{Hardy}
    \||\nabla w^\eps|^{\frac 1 2}\varphi\|_{L^2(\Omega_\eps)} + | q^\eps|^{\frac 1 2}\varphi\|_{L^2(\Omega^\eps)}\lesssim \eta_\eps^{\frac 12 }  \| \nabla \varphi\|_{L^2}.
\end{align}
\end{enumerate}
\end{lem}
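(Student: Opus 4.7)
The plan is to work region by region (on $C_i^\eps,D_i^\eps,K_i^\eps$) and then sum over cells. The main analytic inputs are the standard decay estimates for the exterior Stokes problem~\eqref{w_k}, $|w_k(y)|+|y||\nabla w_k(y)|+|y|^2|q_k(y)|\lesssim 1/|y|$ for $|y|\geq 1$, together with the smoothness/boundedness of $w_k,q_k$ up to $\partial\mathcal T$ and Stokes regularity on a fixed smooth annulus. For~\ref{item:infty}, on $C_i^\eps$ the bound is immediate from undoing the scaling $y=(x-x_i^\eps)/\eps^\alpha$; on $D_i^\eps$, I would rescale the annulus by $\eta_\eps$ to the fixed annulus $B_{1/2}\setminus B_{1/4}$, so that the boundary data becomes $e_k$ on the outer boundary and $e_k+O(\eps^\alpha/\eta_\eps)$ on the inner boundary, whence Stokes regularity gives the perturbation gradient of order $\eps^\alpha/\eta_\eps^2\leq \eps^{-\alpha}$ and likewise for $q^\eps$; on $K_i^\eps$ everything is trivial.

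For~\ref{item:L^2}, a direct per-cell computation using the decay estimates yields
\begin{gather*}
\int_{Q_i^\eps}|\Id-w^\eps|^p\,dx \lesssim \eta_\eps^{3-p}\eps^{\alpha p}\ (3/2<p<3), \quad \int_{Q_i^\eps}|\Id-w^\eps|^3\,dx \lesssim \eps^{3\alpha}|\log\eta_\eps|, \\
\int_{Q_i^\eps}|\nabla w_k^\eps|^2\,dx \lesssim \eps^\alpha, \qquad \int_{Q_i^\eps}|\nabla w_k^\eps|\,dx \lesssim \eps^\alpha \eta_\eps,
\end{gather*}
and analogously for $q_k^\eps$. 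Combining these with the rescaled Sobolev embedding on each cube, $\sum_i\|\varphi\|_{L^\infty(Q_i^\eps)}^p \lesssim \eps^{-3}\|\varphi\|_{W^{2,p}(\R^3)}^p$ (valid as soon as $p>3/2$), and summing over $i$ yields all of~\eqref{w_eps-1}--\eqref{wk.qk.phi.1}. Strong $L^2_{\loc}$ convergence of $w^\eps$ to $\Id$ then follows from the $p=2$ case of~\eqref{w_eps-1} together with the assumption $\alpha>1$.

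For~\ref{item:Hardy}, the estimate $|\nabla w_k^\eps|\lesssim \eps^\alpha/|x-x_i^\eps|^2$ on $B_{\eta_\eps/2}(x_i^\eps)\setminus\mathcal T_i^\eps$ reduces the claim to
\begin{align*}
\eps^\alpha \sum_i \int_{B_{\eta_\eps/2}(x_i^\eps)\setminus\mathcal T_i^\eps}\frac{|\varphi|^2}{|x-x_i^\eps|^2}\,dx \lesssim \eta_\eps\|\nabla\varphi\|_{L^2(\Omega_\eps)}^2.
\end{align*}
I would exploit the identity $\dv\bigl((x-x_i^\eps)/|x-x_i^\eps|^2\bigr)=|x-x_i^\eps|^{-2}$ and integrate by parts on each ball: the inner boundary term drops thanks to $\varphi|_{\partial\mathcal T_i^\eps}=0$, leaving a bulk $\int_{B_{\eta_\eps/2}(x_i^\eps)}|\nabla\varphi|^2$, whose sum contributes $\eps^\alpha\|\nabla\varphi\|^2\leq\eta_\eps\|\nabla\varphi\|^2$, plus an outer trace contribution $\sim\eta_\eps^{-1}\int_{\partial B_{\eta_\eps/2}(x_i^\eps)}|\varphi|^2\,dS$. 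A local trace inequality turns this into $\eta_\eps^{-2}\int_{B_{\eta_\eps}(x_i^\eps)}|\varphi|^2+\int_{B_{\eta_\eps}(x_i^\eps)}|\nabla\varphi|^2$, and the first term is then controlled by a perforated-cube Poincar\'e inequality exploiting $\varphi|_{\mathcal T_i^\eps}=0$, whose constant scales as $\eta_\eps^3/\mathrm{Cap}(\mathcal T_i^\eps)\sim\eta_\eps^3/\eps^\alpha$ in three dimensions; multiplication by $\eps^\alpha$ produces exactly the target $\eta_\eps\|\nabla\varphi\|^2$. The estimate on $q_k^\eps$ is identical, starting from $|q_k^\eps|\lesssim \eps^\alpha/|x-x_i^\eps|^2$. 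I expect this last step to be the main obstacle: the Hardy$\to$trace$\to$Poincar\'e chain has to line up exactly, and this crucially uses the 3D capacity scaling $\mathrm{Cap}(\mathcal T_i^\eps)\sim\eps^\alpha$ together with the assumption $\eta_\eps\geq\eps^\alpha$.
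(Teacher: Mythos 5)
Your proposal is correct, and for parts \ref{item:infty} and \ref{item:L^2} it is essentially the paper's own argument: the same pointwise decay bounds $|\Id-w^\eps|\lesssim \eps^\alpha|x-x_i^\eps|^{-1}$ and $|\nabla w^\eps|+|q^\eps|\lesssim \eps^\alpha|x-x_i^\eps|^{-2}$ on $C_i^\eps\cup D_i^\eps$ (rescaling plus Stokes regularity on the annulus), the same per-cell $L^p$ computations, and the same cell-wise Sobolev embedding $\sum_i\|\varphi\|_{L^\infty(Q_i^\eps)}^p\lesssim \eps^{-3}\|\varphi\|_{W^{2,p}(\R^3)}^p$ as in \eqref{1-w_eps.proof}; your use of $w^\eps=0$ on $B_{\delta\eps^\alpha}(x_i^\eps)\subset\mathcal T_i^\eps$ to produce the logarithm at $p=3$ is also exactly the paper's device. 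The genuine difference is part \ref{item:Hardy}: the paper proves $\||\nabla w_k^\eps|\varphi^2\|_{L^1(Q_i^\eps)}\lesssim\eta_\eps\|\nabla\varphi\|_{L^2(Q_i^\eps)}^2$ by a purely one-dimensional argument along rays from $x_i^\eps$ (fundamental theorem of calculus starting from $B_{\delta\eps^\alpha}(x_i^\eps)$ where $\varphi=0$, then Cauchy--Schwarz with weight $t^2$), which is completely self-contained; you instead use the identity $\dv\bigl((x-x_i^\eps)/|x-x_i^\eps|^2\bigr)=|x-x_i^\eps|^{-2}$, absorb the bulk term, and control the outer trace term by a trace inequality plus a capacitary Poincar\'e inequality with constant $\eta_\eps^3/\mathrm{Cap}(\mathcal T_i^\eps)\sim\eta_\eps^3\eps^{-\alpha}$. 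Your chain does line up: the two places where $\eta_\eps\geq\eps^\alpha$ enters and the 3D scaling $\mathrm{Cap}(\eps^\alpha\mathcal T)=\eps^\alpha\mathrm{Cap}(\mathcal T)$ are exactly what is needed, and the summation over cells is harmless because the balls $B_{\eta_\eps}(x_i^\eps)$ have bounded overlap since $\eta_\eps\leq\eps$; the price is that you import the capacitary Poincar\'e inequality as an external (though standard, Maz'ya/Cioranescu--Murat-type) ingredient, whereas the paper's ray argument needs nothing beyond Cauchy--Schwarz. One remark: as printed, \eqref{Hardy} contains $|\nabla q_k^\eps|^{1/2}$, but what is used later in the paper (and what both you and the paper's proof actually estimate) is $|q_k^\eps|^{1/2}$, which obeys the same bound $\eps^\alpha|x-x_i^\eps|^{-2}$ as $|\nabla w^\eps|$; with $|\nabla q^\eps|\sim\eps^\alpha|x-x_i^\eps|^{-3}$ the stated right-hand side would acquire a logarithmic factor, so your reading is the intended one.
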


\begin{proof}
\noindent \emph{Step 1: Pointwise estimates and proof of \ref{item:infty}.}

\begin{align} 
    |\Id - w^\eps|(x - x_i^\eps) \lesssim \frac{\eps^\alpha}{|x - x_i^\eps|} \qquad \text{in } C_i^\eps \cup D_i^\eps, \label{pointwise.w^eps-1} \\
    |\nabla w^\eps| + |q^\eps| \lesssim\frac{\eps^\alpha}{|x - x_i^\eps|^2}  \qquad \text{in } C_i^\eps \cup D_i^\eps.  \label{pointwise.nabla.w^eps}
\end{align}
The estimates on $C_i^\eps$ follow immediately from standard decay estimates for the Stokes equations in exterior domains (cf. \cite[Theorem V.3.2]{Galdi11}) applied to $(w_k, q_k)$ from \eqref{w_k} and the definition of $w^\eps,q^\eps$ through  rescaling on $C_i^\eps$. 
Consequently, the estimates on $D_i^\eps$ are deduced from the estimates on $\partial D_i^\eps$ and standard regularity theory for the Stokes equations.

Clearly, \ref{item:infty}  follows directly from these pointwise estimates. 

\medskip

\noindent \emph{Step 2: Proof of \ref{item:L^2}.} 
Using \eqref{pointwise.w^eps-1} and $w^\eps = \Id$ in $K_i^\eps$, we compute for one cell, for all $p < 3$,
\begin{align} \label{L^p.single.cube}
    \|\Id-w^\eps\|^p_{L^p(Q_i^\eps)} \lesssim \eps^{\alpha p} \int_{B_{\eta_\eps/2}(x_i^\eps)} |x - x_i^\eps|^{-p} \dd x\lesssim \eta_\eps^{3 - p} \eps^{\alpha p} .
\end{align}
For any compact $K \subset \R^3$, we can cover $K$ by $C(K) \eps^{-3}$ many cubes $Q_i^\eps$. Hence,  $\|\Id - w^\eps\|^2_{L^2(K)} \lesssim C(K) (\eta_\eps/\eps) \eps^{2(\alpha -1)} \to 0 $ as $\eps \to 0$ since $\eta_\eps \leq \eps$ and $\alpha > 1$.

Denoting $(\varphi)_i = \fint_{Q_i^\eps} \varphi \dd x$, we have  for $p > 3/2$ by the Sobolev embedding $W^{2,p}(Q_i^\eps) \subset L^\infty(Q_i^\eps)$    and the Poincar\'e inequality that 
\begin{align}
     \|\varphi - (\varphi)_i - (\nabla \varphi)_i(x-x_i^\eps)\|_{L^\infty(Q_i^\eps)} \leq C_\eps \|\nabla^2 \varphi \|_{L^p(Q_i^\eps)}.
\end{align}
Scaling considerations imply $C_\eps = C \eps^{2 - 3/p}$. Thus, using also that $|(\psi)_i |\leq \eps^{-3/p} \|\psi\|_{L^p(Q_i^\eps)}$,
\begin{align}
   \|\varphi\|_{L^\infty(Q_i^\eps)} \leq   \|\varphi - (\varphi)_i - (\nabla \varphi)_i(x-x_i^\eps)\|_{L^\infty(Q_i^\eps)} +  |(\varphi)_i| + \eps |(\nabla \varphi)_i| \lesssim \eps^{- 3/p} \| \varphi \|_{W^{2,p}(Q_i^\eps)}.
\end{align}

Hence, for $p \in(3/2,3)$ 
\begin{align}
\begin{aligned}
    \|\varphi(\Id- w^\eps)\|^p_{L^p(\R^3)} &\lesssim \sum_i \eta_\eps^{3 - p} \eps^{\alpha p} \|\varphi\|^p_{L^\infty(Q_i^\eps)} 
    % &\lesssim  \eta_\eps^{3 - p} \eps^{\alpha p } \sum_i\left( \|\varphi - (\varphi)_i - (\nabla \varphi)_i(x-x_i^\eps)\|^p_{L^\infty(Q_i^\eps)} + |(\varphi)_i|^p + \eps^p |(\nabla \varphi)_i|^p \right) \\
    % & \lesssim \eta_\eps^{3 - p} \eps^{\alpha p -3} \left( \eps^{2p - 3}\|\nabla^2 \varphi \|_{L^p(\R^3)}^p  +\|\varphi \|_{L^p(\R^3)}^p  + \eps^p\|\nabla \varphi\|_{L^p(\R^3)}^p\right) \\
    \lesssim \eta_\eps^{3 - p} \eps^{\alpha p-3} \|\varphi \|_{W^{2,p}(\R^3)}^p.
    \end{aligned}
    \label{1-w_eps.proof}
\end{align}
Estimates \eqref{w_eps-1.p=3}--\eqref{wk.qk.phi.1} are proved analogously. For \eqref{w_eps-1.p=3} we use in addition that $B_{\delta \eps^\alpha}(x^\eps_i) \subset \mathcal T_i^\eps $ for some $\delta >0$ that depends only on the reference particle $\mathcal T$. Therefore $w^\eps = 0$ in $B_{\delta \eps^\alpha}(x^\eps_i)$.

\medskip

\noindent \emph{Step 3: Proof of \ref{item:Hardy}:}
    It suffices to prove that for all $\varphi \in C^\infty(Q_i^\eps)$ with $\varphi = 0 $ in $\mathcal T_i^\eps$, we have
    \begin{align}
        \|  |\nabla w^\eps| \varphi^2\|_{L^1(Q_i^\eps)}  \lesssim \eta_\eps \|\nabla \varphi\|_{L^2(Q_i^\eps)}^2.
    \end{align}
        Without loss of generality, we assume $x_i^\eps = 0$.
    By the pointwise estimate \eqref{pointwise.nabla.w^eps} and the fundamental theorem of calculus, we have for all $x \in C_i^\eps \cup D_i^\eps$ with $\delta > 0$ as above 
    \begin{align}
        |\nabla w^\eps(x)| |\varphi(x)|^2 \lesssim \frac{\eps^\alpha}{|x|^2} |\varphi(x)|^2 \leq \frac{\eps^\alpha}{|x|^2} \left(\int_{\delta \eps^\alpha}^{|x|}  \Bigl|\nabla \varphi\Bigl(\frac{t x}{|x|}\Bigr)\Bigr|  \dd t \right)^2.
    \end{align}
    This implies
    \begin{align}
        \||\nabla w^\eps| \varphi^2\|_{L^1(Q_i^\eps)} &\lesssim \eps^\alpha \int_{S^2} \int_{\delta \eps^\alpha}^{\eta_\eps/2} |\varphi(r n)|^2 \dd r \dd n 
         \leq   \eps^\alpha \int_{S^2} \int_{\delta \eps^\alpha}^{\eta_\eps/2}  \left(\int_{\delta \eps^\alpha}^{\eta_\eps/2} |\nabla \varphi(t n)| \dd t \right)^2 \dd r \dd n \\
        &\lesssim \eta_\eps \eps^{\alpha} \int_{S^2} \int_{\delta \eps^\alpha}^{\eta_\eps/2} r^2 |\nabla \varphi(r n)|^2 \dd r \dd n \int_{\delta \eps^\alpha}^{\eta_\eps/2} \frac 1 {r^2} \dd r \lesssim  \eta_\eps \|\nabla \varphi\|_{L^2(Q_i^\eps)}^2,
    \end{align}
    as claimed. The proof of the estimate for the term involving $q^\eps$ is analogous.
\end{proof}

\begin{lem} \label{lem:M_eps} We can  write
\begin{align} \label{decomposition.M.gamma}
  - \Delta w^\eps +  \nabla q^\eps = \eps^{\alpha - 3} M_\eps - \gamma_\eps 
\end{align}
for some $M_\eps,\gamma_\eps \in W^{-1,\infty}(\R^3)$
where $\langle \gamma_\eps, v \rangle = 0$ for all $v \in H^1_0(\Omega_\eps)$ and, for all $\varphi \in  H^{3}(\R^3)$ and all $\psi \in H^1(\R^3)$,
\begin{align} \label{M.W^-1,infty}
    \langle (M_\eps - \mathcal R)\varphi, \psi \rangle  \lesssim \left(\eta_\eps^{-1} \eps^{\alpha } \|\psi\|_{L^2(\R^3)} + \eta_\eps^{-\frac 1 2} \eps^{\frac{3}2} \| \psi\|_{H^1(\R^3)} \right) \|\varphi\|_{H^3(\R^3)},
\end{align}
where the matrix $\mathcal R$ is defined in  \eqref{def:R}.
\end{lem}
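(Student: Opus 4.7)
The plan is to mimic the classical Allaire oscillating test function argument, adapted to the intermediate truncation scale $\eta_\eps$. The construction of \eqref{w^eps} is designed so that $-\Delta w^\eps + \nabla q^\eps$ vanishes classically inside each region $K_i^\eps$, $C_i^\eps$, and $D_i^\eps$: trivially in $K_i^\eps$ (where $w^\eps = \Id$, $q^\eps = 0$), by rescaling $y = (x-x_i^\eps)/\eps^\alpha$ and the Stokes equations \eqref{w_k} in $C_i^\eps$, and by construction in $D_i^\eps$. Consequently, as a distribution on $\R^3$, $-\Delta w^\eps + \nabla q^\eps$ is concentrated on the interfaces $\partial \mathcal T_i^\eps$, $\partial B_{\eta_\eps/4}(x_i^\eps)$ and $\partial B_{\eta_\eps/2}(x_i^\eps)$.

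I would define $\gamma_\eps$ as the surface distribution on $\bigcup_i \partial \mathcal T_i^\eps$ arising from the jump of the stress $\partial_n w_k^\eps - q_k^\eps n$ across the particle boundary (where $w_k^\eps$ is extended by $0$ inside $\mathcal T_i^\eps$). Since elements of $H^1_0(\Omega_\eps)$ have vanishing trace on $\partial \mathcal T_i^\eps$, $\langle \gamma_\eps, v \rangle = 0$ for such $v$ automatically. Setting $M_\eps := \eps^{3-\alpha}(-\Delta w^\eps + \nabla q^\eps + \gamma_\eps)$ then yields a distribution supported on the two spherical interfaces within each cell.

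To identify the leading behavior of $M_\eps$, I would apply Green's identity in $B_{\eta_\eps/2}(x_i^\eps) \setminus \mathcal T_i^\eps$ against $\varphi_k\psi$ (viewing $M_\eps$ as matrix-valued with entries indexed by $j,k$). Using the Stokes relations in $C_i^\eps$ and $D_i^\eps$, the $i$-th cell contribution reduces to a boundary integral of $(\partial_n w_k^\eps - q_k^\eps n) \cdot \varphi_k \psi$ on $\partial \mathcal T_i^\eps$ (precisely the part absorbed into $\gamma_\eps$) plus the spherical-interface contributions that survive in $M_\eps$. Freezing $\varphi_k\psi$ at $x_i^\eps$ and rescaling $y = (x-x_i^\eps)/\eps^\alpha$, the surface integral becomes $\eps^\alpha$ times the Stokes drag $\int_{\partial \mathcal T}(\partial_n w_k - q_k n)_j \, \dd \mathcal H^2$; by Green's identity against $w_j$ together with the Stokeslet decay $|w_j(y)| = O(|y|^{-1})$, this integral equals $\mathcal R_{jk}$ as in \eqref{def:R}. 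Multiplying by $\eps^{3-\alpha}$ and noting that cells have volume $\eps^3$, the leading Riemann sum gives $\mathcal R \varphi \cdot \psi$ exactly at this order.

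The remaining errors come from Taylor remainders in freezing $\varphi, \psi$, handled via the pointwise estimates \eqref{pointwise.w^eps-1}--\eqref{pointwise.nabla.w^eps} and the $L^1$-mass bound $\int_{Q_i^\eps}(|\nabla w_k^\eps| + |q_k^\eps|) \lesssim \eps^\alpha \eta_\eps$. Using the Sobolev embedding $H^3(\R^3) \hookrightarrow W^{1,\infty}(\R^3)$ in dimension three to control $\varphi$ and $\nabla\varphi$ pointwise, expansion in $\varphi$ produces the $\eta_\eps^{-1}\eps^\alpha \|\psi\|_{L^2}\|\varphi\|_{H^3}$ contribution, while a scaled trace inequality for $\psi$ on the spheres $\partial B_{\eta_\eps/2}(x_i^\eps)$ gives the $\eta_\eps^{-1/2}\eps^{3/2}\|\psi\|_{H^1}\|\varphi\|_{H^3}$ term. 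The main technical hurdle is the careful bookkeeping of the three scales $\eps^\alpha \leq \eta_\eps \leq \eps$: in Allaire's original setting $\eta_\eps = \eps$, so the truncation is at the cell scale and the finite-$R$ Stokeslet correction is automatic, whereas here one must exploit the precise decay of $w_k$ at infinity to close the estimate at intermediate scales.
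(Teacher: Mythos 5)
Your proposal is correct in substance and shares the paper's overall architecture: you define $\gamma_\eps$ as the stress jump on $\bigcup_i \partial\mathcal T_i^\eps$ (killed by the vanishing trace of $H^1_0(\Omega_\eps)$ functions), put the surviving spherical-interface contributions into $M_\eps$, treat the $D_i^\eps$-side stress as a small volume/divergence term via the pointwise bounds \eqref{pointwise.nabla.w^eps}, and control the test functions through cell-wise Poincar\'e/trace estimates at the scale $\eta_\eps$ — exactly the ingredients of the paper's estimates \eqref{est:M.main}--\eqref{est:M.remainder.1}. Where you genuinely deviate is the identification of the constant $\mathcal R$: the paper invokes Allaire's expansion of the far-field stress density on the sphere of radius $\eta_\eps/(4\eps^\alpha)$, namely $\tfrac12(\mathcal R_k+3(\mathcal R_k\cdot n)n)+O(\eps^\alpha/\eta_\eps)$, whose spherical average is $\mathcal R_k$ and whose $O(\eps^\alpha/\eta_\eps)$ remainder is precisely the source of the $\eta_\eps^{-1}\eps^\alpha\|\psi\|_{L^2}$ term in \eqref{M.W^-1,infty}; you instead use the exact force balance in the annulus together with the Green identity $\int_{\partial\mathcal T}(\partial_n w_k-q_kn)_j\,\d\mathcal H^2=\mathcal R_{jk}$, so the leading constant is exact and only the angular oscillation of the density (of size $\eps^\alpha\eta_\eps^{-2}$, from \eqref{pointwise.nabla.w^eps}) remains to be estimated. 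This is a slightly more self-contained route that avoids citing Allaire's lemma, and it attributes the first error term in \eqref{M.W^-1,infty} differently than the paper does; since the lemma only asserts an upper bound, that is harmless.

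One point needs more care than your sketch suggests: you cannot literally ``freeze $\varphi_k\psi$ at $x_i^\eps$'' since $\psi$ is only $H^1$, and after replacing pointwise values by averages, the comparison between the average over the small ball/sphere of radius $\eta_\eps/4$ and the integral over the full cell $Q_i^\eps$ must be done with the scale-invariant Sobolev--Poincar\'e inequality \eqref{Sobolev-Poincare} combined with \eqref{Poincare.boundary}; a naive cell-scale Poincar\'e argument loses a factor $(\eps/\eta_\eps)$ and would yield $\eps^{5/2}\eta_\eps^{-3/2}$ instead of the claimed $\eta_\eps^{-1/2}\eps^{3/2}$ in front of $\|\psi\|_{H^1}$. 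With that implementation (which is exactly the paper's computation for \eqref{est:M.main}), your argument closes and yields the stated bound.
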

\begin{proof}
    We observe that  
$-\Delta w^\eps + \nabla q^\eps$ is supported on $\bigcup_i \partial C_i^\eps \cup \partial D_i^\eps = \bigcup_i \partial D_i^\eps \cup \partial  \Omega_\eps$ and we define $\gamma_\eps$ to be the part supported on $\partial \Omega_\eps$ which consequently satisfies $\langle \gamma_\eps, v \rangle = 0$ for all $v \in H^1_0(\Omega_\eps)$. Then \eqref{decomposition.M.gamma} holds with $M_k^\eps$, the columns of $M^\eps$, being
    \begin{align} \label{M_eps^k}
        M^\eps_k  = \eps^{3 -\alpha} \sum_i \left( m^\eps_{k,i} + \dv(\1_{D_i^\eps} (q_k^\eps \Id - \nabla w_k^\eps))\right) 
    \end{align}
    where 
    \begin{align} \label{m_k^eps}
         m^\eps_{k,i}= \eps^{-\alpha} (q_k \Id - \nabla w_k)(\eps^{-\alpha} x) n |\partial B_{\eta_\eps/4}| \delta^i_{\eta_\eps/4}, && \delta^i_{\eta_\eps/4} = \frac{\mathcal H^2|_{\partial B_{\eta_\eps/4}(x_i^\eps)}}{|\partial B_{\eta_\eps/4}(x_i^\eps)|},
    \end{align}
    and where $w_k,q_k$ are as in \eqref{w_k} and  $n$ is the unit normal on $\partial B_{\eta_\eps/4}(x_i^\eps)$.
    By \cite[Lemma 2.3.5]{Allaire90a} (which follows from the fact that $w_k,q_k$ asymptotically behave as the fundamental solution of the Stokes equations), we have 
    \begin{align}
        m^\eps_{k,i} =  \frac {\eps^{\alpha}} 2 \left( \mathcal R_k + 3  (\mathcal R_k \cdot n) n + \eta_\eps^{-1}  \eps^{\alpha} r^\eps_{k,i}\right) \delta^i_{\eta_\eps/4}, &&
        \|r^\eps_{k,i}\|_{W^{1,\infty}(\partial B_{\eta_\eps/4})} \lesssim 1.
     \end{align}
     To conclude the proof, it suffices to show that for all $\varphi \in H^3(\R^3)$ and all $\psi \in H^1(\R^3)$
     \begin{align}
        \Bigl\|\varphi \Bigl(\mathcal R_k - \frac {\eps^{3}} 2 \sum_i  \left( \mathcal R_k + 3  (\mathcal R_k \cdot n) n\right)\delta^i_{\eta_\eps/4}\Bigr) \Bigr\|_{H^{-1}(\R^3)}  \lesssim \eta_\eps^{-\frac 1 2} \eps^{\frac{3} 2 } \|\varphi\|_{H^3(\R^3)},  \label{est:M.main} \\
          \eps^{3-\alpha} \Bigl\|\varphi \sum_i \dv(\1_{D_i^\eps} (q_k^\eps \Id - \nabla w_k^\eps)) \Bigr\|_{H^{-1}(\R^3)}  \lesssim \eta_\eps^{-\frac 1 2} \eps^{\frac{3} 2 } \|\varphi\|_{H^3(\R^3)} \label{est:M.remainder.2}, \\
         \Bigl\langle \varphi \eps^{3} \sum_i r^\eps_{k,i} \delta^i_{\eta_\eps/4} , \psi \Bigr\rangle  \lesssim \|\varphi\|_{H^3(\R^3)}\Bigl(\|\psi\|_{L^2(\R^3)} +  \eta_\eps^{-\frac 1 2} \eps^{\frac{3} 2 } \| \psi\|_{H^1(\R^3)} \Bigr). \qquad \label{est:M.remainder.1} 
     \end{align}
     Indeed, $\eta_\eps^{-1} \eps^\alpha \leq 1$ by assumption and thus \eqref{M_eps^k}--\eqref{est:M.remainder.1} imply the assertion.
     
     To prove \eqref{est:M.main}, we begin by observing that for all $v \in  H^1(Q_i^\eps)$ we have due to Sobolev embedding
     \begin{align} \label{Sobolev-Poincare}
         \|v - (v)_i\|_{L^6({Q_i^\eps})} \leq C \|\nabla v\|_{L^2({Q_i^\eps})},
     \end{align}
     where we recall the notation $(v)_i = \fint_{Q_i^\eps} v$ and where the constant $C$ is universal due to scaling considerations. Similarly, we have the Poincaré-type inequality
         \begin{align} \label{Poincare.boundary}
         \fint_{\partial B_{\eta_\eps /4}(x_i^\eps)} \Bigl| v - \fint_{B_{\eta_\eps /4}(x_i^\eps)} v  \dd x \Bigr| \dd y \lesssim \eta_\eps^{-\frac 1 2} \|\nabla v\|_{L^2(B_{\eta_\eps /4}(x_i^\eps))}.
     \end{align}
     Since 
     \begin{align}
         \fint_{\partial B_{\eta_\eps/4}} \frac 1 2 \left( \mathcal R_k + 3  (\mathcal R_k \cdot n) n\right) \dd x = \mathcal R_k,
     \end{align}
    %  we have the Poincar\'e type inequality
    %  \begin{align} \label{Poincare-type}
    %      \left|\fint_{B_{\eps^\beta/4}(x_i^\eps)} \mathcal R_k \cdot v \dd x -  \fint_{\partial B_{\eps^\beta/4}(x_i^\eps)} \frac 1 2 \left( \mathcal R_k + 3  (\mathcal R_k \cdot n) n\right) \cdot v \dd x \right| \lesssim \eps^{-\frac{\beta}{2}}  \|\nabla v\|_{L^2( B_{\eps^\beta/4}(x_i^\eps))}.
    %  \end{align}
     we deduce that for any $v \in H^1(Q_i^\eps)$ that
     \begin{align}
        & \Bigl| \int_{Q_i^\eps} \Bigl(v \cdot \mathcal R_k -  \frac 1 2 \fint_{\partial B_{\eta_\eps/4}(x_i^\eps)} v \cdot \left( \mathcal R_k + 3  (\mathcal R_k \cdot n) n\right) \dd y \Bigr) \dd x \Bigr|  \\
         & = \frac {\eps^3 } 2   \Bigl| \fint_{\partial B_{\eta_\eps/4}(x_i^\eps)} (v - (v)_i)  \cdot \left( \mathcal R_k + 3  (\mathcal R_k \cdot n) n \right) \dd x \Bigr| \\
         &\lesssim \eta_\eps^{-\frac 1 2} \eps^{3}  \|\nabla v\|_{L^2( B_{\eta_\eps/4}(x_i^\eps))} + \eps^3  \fint_{B_{\eta_\eps/4}(x_i^\eps)} |v - (v)_i| 
        \dd x\\
         &\lesssim \eta_\eps^{-\frac 1 2} \eps^{3}  \|\nabla v\|_{L^2( B_{\eta_\eps/4}(x_i^\eps))} + \eta_\eps^{-\frac 1 2} \eps^{3} \||v - (v)_i\|_{L^6(B_{\eta_\eps/4}(x_i^\eps))} \\
         & \lesssim \eta_\eps^{-\frac 1 2} \eps^{3}  \|\nabla v\|_{L^2(Q_i^\eps)}.
     \end{align}
     Therefore, for $\varphi \in H^3(\R^3)$ and $\psi \in H^1(\R^3)$,
     \begin{align}
          \Bigl\langle \varphi \Bigl(\mathcal R_k -  \frac {\eps^{3}} 2 \sum_i  \left( \mathcal R_k + 3  (\mathcal R_k \cdot n) n\right)\delta^i_{\eta_\eps/4} \Bigr), \psi  \Bigr\rangle 
          & \lesssim  \eta_\eps^{-\frac 1 2}\eps^{3}  \sum_i \|\nabla (\varphi \psi)\|_{L^2(Q_i^\eps)} \\
          &\lesssim \eta_\eps^{-\frac 1 2} \eps^{\frac{3} 2 } \|\psi\|_{H^1(\R^3)} \Bigl( \eps^3 \sum_i \|\varphi\|_{W^{1,\infty}(Q_i^\eps)}^2 \Bigr)^{\frac 1 2} \\
          & \lesssim  \eta_\eps^{-\frac 1 2}\eps^{\frac{3} 2 } \|\psi\|_{H^1(\R^3)} \|\varphi\|_{H^3(\R^3)},
     \end{align}
     where the last inequality is shown as in \eqref{1-w_eps.proof}
     
     \smallskip
     
          We turn to \eqref{est:M.remainder.2}. We use the pointwise estimates \eqref{pointwise.nabla.w^eps} to bound
     \begin{align}
         \eps^{3-\alpha} \Bigl\langle \varphi \sum_i \dv(\1_{D_i^\eps} (q_k^\eps \Id - \nabla w_k^\eps)),\psi \Bigr\rangle &\lesssim \eps^{3-\alpha} \eta_\eps^{\frac 3 2}  \sum_i \|q_k^\eps \Id - \nabla w_k^\eps\|_{L^\infty(D_i^\eps)}  \|\psi\|_{H^1(Q_i^\eps)} \|\varphi\|_{W^{1,\infty}(Q_i^\eps)} \\
         & \lesssim \eps^{3 - \alpha}  \eta_\eps^{\frac 3 2}  \eta_\eps^{-2}   \eps^{\alpha} \eps^{-3/2} \|\psi\|_{H^1(\R^3)} \|\varphi\|_{H^3(\R^3)} \\
         &= \eta_\eps^{-\frac 1 2} \eps^{\frac{3}2}\|\psi\|_{H^1(\R^3)} \|\varphi\|_{H^3(\R^3)} .
     \end{align}

     It remains to show \eqref{est:M.remainder.1}. Using again \eqref{Sobolev-Poincare} and  \eqref{Poincare.boundary},
     we have for any $v \in H^1(Q_i^\eps)$
     \begin{align}
         \Bigl| \fint_{\partial B_{\eta_\eps/4}(x_i^\eps)} v \dd x \Bigr|& \lesssim   \fint_{\partial B_{\eta_\eps/4}(x_i^\eps)} \Bigl|v  - \fint_{ B_{\eta_\eps/4}(x_i^\eps)} v \dd y \Bigr| \dd x + \fint_{ B_{\eta_\eps/4}(x_i^\eps)} |v  - (v)_i| \dd x   + |(v)_i|   \\
         &\lesssim   \eta_\eps^{-\frac 1 2}   \|\nabla v\|_{L^2(Q_i^\eps)} + \eps^{-3/2} \|v\|_{L^2(Q_i^\eps)}.
     \end{align}
     Thus, for  $\varphi \in H^3(\R^3)$ and $\psi \in H^1(\R^3)$, using \eqref{m_k^eps},
     \begin{align}
         \Bigl\langle \varphi \eps^{3} \sum_i r^\eps_{k,i} \delta^i_{\eta_\eps/4}, \psi \Bigr\rangle &\lesssim  \eta_\eps^{-\frac 1 2} \eps^{3 } \sum_i \|\psi\|_{H^1(Q_i^\eps)} \|\varphi\|_{W^{1,\infty}(Q_i^\eps)} +  \eps^{3/2} \sum_i \|\psi\|_{L^2(Q_i^\eps)} \|\varphi\|_{L^{\infty}(Q_i^\eps)}  \\
         & \lesssim  \|\varphi\|_{H^3(\R^3)}\left(  \eta_\eps^{-\frac 1 2} \eps^{\frac{3} 2 } \| \psi\|_{H^1(\R^3)} + \|\psi\|_{L^2(\R^3)}  \right).
     \end{align}
     This finishes the proof.
\end{proof}

\medskip

\begin{lem} \label{lem:Bogovski}
      For all $1 < p < \infty$, there exists a linear operator $\mathcal B_\eps: W^{1,p}(\R^3) \to W^{1,p}_0(\Omega_\eps)$ such that for all $\varphi \in  W^{1,p}(\R^3)$ that are divergence free we have
      \begin{align} \label{Bog}
          \dv \mathcal B_\eps(\varphi) = w^\eps : \nabla \varphi
      \end{align}
      and
      \begin{align} \label{Bog.H^1}
          \|\nabla \mathcal B_\eps(\varphi) \|_{L^p} \lesssim \|(\Id - w^\eps) : \nabla \varphi\|_{L^p},  && 
          \|\mathcal B_\eps(\varphi) \|_{L^p} \lesssim \eta_\eps\|(\Id - w^\eps) : \nabla \varphi\|_{L^p}.
      \end{align}
\end{lem}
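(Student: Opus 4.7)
The plan is to solve the divergence equation cell by cell, applying the classical Bogovskii operator on each ball $B_{\eta_\eps/2}(x_i^\eps)$ and summing the resulting pieces, whose supports are pairwise disjoint.

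First I would reduce the two sides of \eqref{Bog.H^1}. Since $\varphi$ is divergence free, $\Id : \nabla \varphi = \dv \varphi = 0$, and so $w^\eps : \nabla \varphi = -(\Id - w^\eps) : \nabla \varphi$ pointwise. Writing $g := w^\eps : \nabla \varphi$, I would identify its support: on $\mathcal T_i^\eps$ one has $w^\eps = 0$, while on $K_i^\eps$ one has $w^\eps = \Id$ combined with $\dv \varphi = 0$, so $g$ vanishes on both regions. Hence $\supp g \subset \bigcup_i (C_i^\eps \cup D_i^\eps) \subset \bigcup_i B_{\eta_\eps/2}(x_i^\eps)$.

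Next I would verify the Bogovskii compatibility condition on each ball. Using $\dv w_k^\eps = 0$ from Lemma \ref{lem:w^eps}, I would write $g = \sum_k \dv(\varphi_k w_k^\eps)$ and apply the divergence theorem on $Q_i^\eps$. Since $w^\eps \equiv \Id$ in a neighborhood of $\partial Q_i^\eps$, the boundary integral collapses to $\int_{\partial Q_i^\eps} \varphi \cdot n = \int_{Q_i^\eps} \dv \varphi = 0$. As $g$ is supported inside the smaller ball $B_{\eta_\eps/2}(x_i^\eps)$, this gives $\int_{B_{\eta_\eps/2}(x_i^\eps)} g \dd x = 0$, so the Bogovskii problem on the ball is well-posed.

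I would then invoke the classical Bogovskii operator on $B_{\eta_\eps/2}(x_i^\eps)$, producing $B_i \in W^{1,p}_0(B_{\eta_\eps/2}(x_i^\eps))$ with $\dv B_i = g$ on the ball, extended by zero to $\R^3$. A scaling argument from the unit ball (where the Bogovskii constant depends only on $p$), via the change of variables $y = 2(x - x_i^\eps)/\eta_\eps$ and the ansatz $B_i(x) = (\eta_\eps/2) B(y)$, yields $\|\nabla B_i\|_{L^p} \lesssim \|g\|_{L^p(B_{\eta_\eps/2})}$ and $\|B_i\|_{L^p} \lesssim \eta_\eps \|g\|_{L^p(B_{\eta_\eps/2})}$; the extra factor $\eta_\eps$ in the second bound reflects the dilation in the value of $B_i$ itself (the gradient picks up a compensating factor from the chain rule). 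Setting $\mathcal B_\eps(\varphi) := \sum_i B_i$ defines a linear operator, and the pairwise disjointness of the balls (since $\eta_\eps \leq \eps$ and centers lie on $\eps \Z^3$) lets me sum the $p$-th powers of the local $L^p$ estimates to recover the global bounds \eqref{Bog.H^1}, after substituting $g = -(\Id - w^\eps) : \nabla \varphi$ on the right.

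The only mildly delicate point is keeping the Bogovskii constant uniform in $\eps$. This is why I would work on the full ball $B_{\eta_\eps/2}(x_i^\eps)$ rather than on the perforated region $B_{\eta_\eps/2}(x_i^\eps) \setminus \mathcal T_i^\eps$: Bogovskii constants on such a shrinking perforated domain would in principle deteriorate as $\eps^\alpha / \eta_\eps \to 0$, whereas constants on a full ball are scale-invariant and depend only on $p$. Since $g$ already vanishes on $\mathcal T_i^\eps$, solving on the full ball yields the identity $\dv B_i = g$ without any loss.
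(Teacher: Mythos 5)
Your construction does prove the lemma exactly as stated, and it is a genuinely different route from the paper's: you exploit that $w^\eps:\nabla\varphi$ vanishes both on the particles (where $w^\eps=0$) and on $K_i^\eps$, check the mean-zero condition by the same divergence-theorem argument as the paper, and then solve the divergence problem with the \emph{classical} Bogovskii operator on the full balls $B_{\eta_\eps/2}(x_i^\eps)$, getting uniform constants by pure scaling (the factor $\eta_\eps$ in the second bound of \eqref{Bog.H^1} coming out of the dilation, respectively Poincar\'e on the unit ball). The paper instead solves in the perforated cells $A_i^\eps=C_i^\eps\cup D_i^\eps=B_{\eta_\eps/2}(x_i^\eps)\setminus\mathcal T_i^\eps$, invoking the Bogovskii operator of \cite[Lemma 3.1]{DieningFeireislLu17}, whose norm is uniform despite the tiny hole; your stated reason for avoiding $A_i^\eps$ (that the constant should degenerate as $\eps^\alpha/\eta_\eps\to 0$) is in fact not an obstruction --- uniformity for such domains is precisely what that reference (via Acosta--Dur\'an--Muschietti on John domains) provides, and it is the reason the paper can afford the perforated cell.

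The difference is not cosmetic, though: the paper's local operators land in $W^{1,p}_0(A_i^\eps)$, so $\mathcal B_\eps(\varphi)$ vanishes on (and inside) every particle $\mathcal T_i^\eps$. This property is not written in the statement of Lemma \ref{lem:Bogovski}, but it is used downstream: in Step 1 of the proof of Proposition \ref{pro:v_eps} one needs $\check u_\eps=w^\eps u-\mathcal B_\eps(u)=0$ on $\partial\Omega_\eps$, so that $\check u_\eps$ and $v_\eps=\check u_\eps-u_\eps$ are admissible in the weak formulation for $u_\eps$, so that the Hardy-type estimate \eqref{Hardy} and the Poincar\'e inequality \eqref{Poincare.Omega_eps} (both stated for $H^1_0(\Omega_\eps)$) apply to $v_\eps$, and likewise in Theorem \ref{th:Darcy} where $w^\eps\varphi-\mathcal B_\eps(\varphi)$ is used as a test function in $\Omega_\eps$. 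Your $B_i$, being the Bogovskii solution on the whole ball with datum supported in the annular region, has no reason to vanish on $\partial\mathcal T_i^\eps$, so the resulting $\mathcal B_\eps$ would not serve these later steps; repairing this essentially forces one back to solving the divergence problem on the perforated cell, i.e.\ to the paper's argument. So: correct for the literal statement, but a strictly weaker operator than the one the paper constructs and needs.
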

\begin{proof}
    It suffices to construct the linear operator on the subspace of divergence free functions $\varphi \in W^{1,p}(\R^3)$. We observe that then $w^\eps : \nabla \varphi = 0$ in $\R^3 \setminus A_i^\eps$ where $A_i^\eps := C_i^\eps \cup D_i^\eps$ and, since the functions $w_k^\eps$ are divergence free,
    \begin{align}
        \int_{A_i^\eps} w^\eps : \nabla \varphi \dd x = \int_{A_i^\eps \cup \mathcal T_i^\eps} w^\eps : \nabla \varphi \dd x = \int_{A_i^\eps \cup \mathcal T_i^\eps} \dv((w^\eps-\Id) \varphi) \dd x =  0
    \end{align}
    as $w^\eps = \Id$ on $\partial D_i^\eps$. Therefore we may employ a Bogovski operator in $A_i^\eps$. More precisely, by \cite[Lemma 3.1]{DieningFeireislLu17} (which is a consequence of \cite{AcostaDuranMuschietti06} and \cite{DieningRuzickaSchumacher10}), there exist operators $\mathcal B^\eps_i : L^p_0(A_i^\eps) \to W^{1,p}_0(A_i^\eps)$ ($L^p_0$ denotes the subspace of $L^p$ functions with vanishing mean) such that for all $h \in L_0^p(A_i^\eps)$
    \begin{align}
        \dv \mathcal B^\eps_i (h) = h, && \|\mathcal B^\eps_i (h) \|_{W^{1,p}_0(A_i^\eps)} \lesssim \|h\|_{L^p_0(A_i^\eps)}.
    \end{align}
    We then deduce that $\mathcal B_\eps(\varphi) := \sum_i \mathcal B^\eps_i( w^\eps : \nabla \varphi)$ satisfies \eqref{Bog} as well as the first inequality in \eqref{Bog.H^1}.
    % The second inequality in \eqref{Bog.H^1} follows from \eqref{w_eps-1} since $w^\eps : \nabla \varphi = (w^\eps-1):\nabla \varphi)$. 
    The second inequality in  \eqref{Bog.H^1} follows from the first one and the Poincar\'e inequality in the domains $A_i^\eps \subset B_{\eta_\eps}(x_i)$.
\end{proof}

For the treatment of the supercritical case, we will rely on the following Poincaré inequality in $\Omega_\eps$. 
It is proved in \cite[Lemma 3.4.1]{Allaire90b} when $\Omega_\eps$ is a bounded domain. Since the proof is based on a local Poincaré inequality in each of the cubes $Q_i^\eps$, it still applies here.
\begin{prop}[{\cite[Lemma 3.4.1]{Allaire90b}}] \label{prop:Poincare}
    For all $\varphi \in H^1_0(\Omega_\eps)$
    \begin{align} \label{Poincare.Omega_eps}
        \|\varphi\|_{L^2(\Omega_\eps)} \lesssim \eps^{\frac{3-\alpha} 2 } \|\nabla \varphi \|_{L^2(\Omega_\eps)}.
    \end{align}
\end{prop}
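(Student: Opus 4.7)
The plan is a standard cell-by-cell argument, with the hole radius entering through a reference-cell capacitary Poincaré inequality. Extending $\varphi$ by zero into $\bigcup_i \mathcal{T}_i^\eps$ produces $\tilde\varphi \in H^1(\R^3)$ with the same $H^1$-norm as $\varphi$; since the cubes $\{Q_i^\eps\}_{i \in \Z^3}$ essentially tile $\R^3$, it suffices to establish the per-cell bound
\begin{equation}
\|\tilde\varphi\|_{L^2(Q_i^\eps)}^2 \lesssim \eps^{3-\alpha}\|\nabla\tilde\varphi\|_{L^2(Q_i^\eps)}^2
\end{equation}
uniformly in $i$ and $\eps$, and sum.

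Translating by $-x_i^\eps$ and rescaling by $\eps$, the per-cell estimate reduces to the following statement on the reference cube $Q = (-1/2,1/2)^3$: for every $\psi \in H^1(Q)$ vanishing on $\eps^{\alpha-1}\mathcal{T}$,
\begin{equation}
\|\psi\|_{L^2(Q)}^2 \lesssim \eps^{1-\alpha}\|\nabla\psi\|_{L^2(Q)}^2.
\end{equation}
Since $0 \in \mathring{\mathcal{T}}$ contains some ball $B_\delta(0)$, the function $\psi$ vanishes in particular on $B_a(0)$ with $a := \delta\eps^{\alpha-1} \in (0,1/2)$. This is now the classical capacitary Poincaré inequality: the Newtonian capacity in $\R^3$ of a ball of radius $a$ is of order $a$, yielding the desired constant $a^{-1} \sim \eps^{1-\alpha}$. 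Quantitatively, one writes $|\psi(r\omega)|^2 = \bigl|\int_a^r \partial_\rho\psi(\rho\omega)\,\dd\rho\bigr|^2$ for $r \in (a,1/2)$, $\omega \in S^2$, and applies Cauchy--Schwarz in the weighted form $|\partial_\rho\psi| = \rho^{-1}\cdot\rho|\nabla\psi|$ to obtain $|\psi(r\omega)|^2 \leq a^{-1}\int_a^{1/2}\rho^2|\nabla\psi(\rho\omega)|^2\,\dd\rho$; integration against $r^2\,\dd r\,\dd\omega$ controls $\|\psi\|_{L^2(B_{1/2})}^2$, and the corner region $Q \setminus B_{1/2}$ is absorbed via a standard Poincaré--Wirtinger inequality on the connected cube $Q$ anchored at a subdomain of $B_{1/2}$, costing only universal constants.

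The only substantive ingredient is the reference-cell estimate above, and dimension three is essential: the capacity of a small ball decays linearly in its radius, whereas in dimension two one would only gain a factor $|\log a|^{-1}$, consistent with the critical scaling $-\eps^2\log a_\eps \sim 1$ noted later in the paper. The argument is entirely local to each cube $Q_i^\eps$, which is why the proof given by Allaire~\cite{Allaire90b} for bounded domains transfers verbatim to the whole-space setting considered here; I expect no genuine obstacle beyond the reference-cell inequality, which is standard.
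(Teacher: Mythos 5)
Your proof is correct and follows essentially the route the paper itself points to: the paper simply cites Allaire's Lemma 3.4.1 and remarks that its cell-by-cell local Poincar\'e argument carries over to the whole space, which is exactly the decomposition into cubes $Q_i^\eps$, rescaling to the unit cell, and capacitary (vanishing-on-a-small-ball) Poincar\'e estimate with constant of order $a^{-1}\sim\eps^{1-\alpha}$ that you spell out. Your write-up just makes explicit the details the paper delegates to the citation, including the correct three-dimensional capacity scaling.
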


\section{Proof of the main results} \label{sec:proof.main}

As outlined in Section \ref{sec:strategy}, the strategy for the proof of the main results is based on energy estimates for the difference
\begin{align} \label{v_eps}
    v_\eps = w^\eps u - u_\eps - \mathcal B_\eps(u).
\end{align}
Here $u_\eps$ is the solution to \eqref{Navier.Stokes.holes} in the  critical and subcritical case and to \eqref{Navier.Stokes.holes.rescaled} in the supercritical case and $u$ is the solution to \eqref{Euler-Darcy}, \eqref{Euler} and \eqref{Darcy}, respectively.
Moreover, $w^\eps$ is the matrix valued function defined at the beginning of Section \ref{sec:w} and depends on a parameter $\eps^\alpha \leq \eta_\eps \leq \eps$ that we will choose later.
Finally, $\mathcal B_\eps$ is the operator from Lemma \ref{lem:Bogovski}.

We first observe that the difference $(w^\eps - \Id) u - \mathcal B_\eps(u)$ between $v_\eps$ and $u -  u_\eps$ is very small, namely
     \begin{align}
    \label{v_eps.to.u_eps.2}
    \|v_\eps - (u -  u_\eps) \|_{L^\infty(0,T;L^2(\R^3))}     &\leq C \eta_\eps^{\frac 1 2} \eps^{\alpha -  \frac{ 3}{2}},
\end{align}
where the constant $C$ depends only on  $\mathcal T$ and $\|u\|_{L^\infty(0,T;H^3(\R^3))}$.
Indeed, this follows immediately from \eqref{w_eps-1} and \eqref{Bog.H^1}.

% Note that $\dv w^\eps u = w^\eps : \nabla u = (w^\eps-\Id) : \nabla u  $ since the functions $w^\eps_k$ and $u$ are divergence free.
% Therefore the error between $v_\eps$ and $u_\eps - u$ is very small as we shall see.

% In the following we write $\lesssim$ for $\leq C$ where the constant $C$ is a constant as in Theorem \eqref{th:Euler}, Theorem \eqref{th:Euler-Brinkman} and Theorem \ref{th:Darcy.quantitative}.

\subsection{Proof of Theorem \ref{th:Euler-Brinkman} and Theorem \ref{th:Euler}} \label{sec:proof.subcritical}

Throughout this subsection, we assume that the parameters $\alpha$ and $\gamma$ are in the range of the critical or subcritical regime specified in Theorem \ref{th:Euler-Brinkman} and \ref{th:Euler}, respectively, that is
$\alpha > 3/2$ and $\gamma >0$, $\gamma \in [3-\alpha,\alpha)$ or $\gamma= \alpha$ and $\mu_0 \gg 1$.
Moreover, $v_\eps$ is defined by \eqref{v_eps} where $u_\eps$ is the solution to \eqref{Navier.Stokes.holes} and $u$ is the solution to \eqref{Euler} or \eqref{Euler-Darcy}.

% We start with the following a priori estimate on $v_\eps$ and the difference from $v_\eps$ to $u - u_\eps$.
% \rhcomment{Check whether the first one is even needed...}

The main technical part of the proof of the main results is an energy estimate for $v_\eps$ stated in the following proposition. Thereafter, we show how Theorem \ref{th:Euler-Brinkman} and Theorem  \ref{th:Euler} follow from this proposition and Gronwall's inequality. 

\begin{prop} \label{pro:v_eps}
Let  $\eps^\alpha \leq \eta_\eps \leq \eps$. Then,
\begin{enumerate}[(i)]
    \item Then, under the assumptions of Theorem \ref{th:Euler-Brinkman} we have for all $t \leq T$
  \begin{align} 
  \label{est.v_eps}
  \begin{aligned}
        \|v_\eps(t)\|_{L^2(\Omega_\eps)}^2 + &(\eps^\gamma - C \eta_\eps) \|\nabla v_\eps\|^2_{L^2((0,t) \times \Omega_\eps)} 
       \\
       &\leq \|v_\eps(0)\|_{L^2(\Omega_\eps)}^2 + C\|(f_\eps - f)\|_{L^2((0,T) \times \Omega_\eps)}^2  
    +     C \|v_\eps\|^2_{L^2((0,t) \times \Omega_\eps)} \\
   &+C \left(\eta_\eps \eps^{2 \alpha- \gamma-3}  + \eta_\eps^{-1} \eps^{3- \gamma}  + \eps^{2\gamma} +  \eta_\eps^2 \right) \quad 
   \end{aligned}
  \end{align}
  for some constant $C$ which depends only on  $\mathcal T$, $T$ $\|f\|_{L^\infty(0,T;H^2(\R^3))}$, $\|u\|_{C^1(0,T;H^4(\R^3))}$ and $\|\nabla p\|_{L^\infty(0,T;H^2(\R^3))}$.
  
  \item   Under the assumptions of Theorem \ref{th:Euler} we have for all $t \leq T$
  \begin{align} \label{est.v_eps.Euler}
  \begin{aligned}
        \|v_\eps(t)\|_{L^2(\Omega_\eps)}^2 + &(\mu_0 \eps^\gamma - C \eta_\eps) \|\nabla v_\eps\|^2_{L^2((0,t)\times\Omega_\eps)} \\
        &\leq \|v_\eps(0)\|_{L^2(\Omega_\eps)}^2 + C\|(f_\eps - f)\|_{L^2(0,T)\times \Omega_\eps)}^2 
        +    C \|v_\eps\|^2_{L^2(0,t;L^2(\Omega_\eps))}  \\ &+C_{\mu_0}\left(\eps^{2 \alpha  + 2 \gamma-6} + \eta_\eps \eps^{2 \alpha- \gamma-3} + \eta_\eps^{-1} \eps^{2 \alpha + \gamma-3}   + \eps^{2\gamma} +  \eta_\eps^2 \right)
        \end{aligned}
  \end{align}
  for some $C$ which depends only on  $\mathcal T$, $T$, $\|f\|_{L^\infty(0,T;H^2(\R^3))}$, $\|u\|_{C^1(0,T;H^4(\R^3))}$, $\|\nabla p\|_{L^\infty(0,T;H^2(\R^3))}$ and some $C_{\mu_0}$ which depends additionally on $\mu_0$.
  \end{enumerate}
\end{prop}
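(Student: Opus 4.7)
The plan is to derive a PDE for $v_\eps$ by subtracting the equations satisfied by $u_\eps$, $w^\eps u$, and $\mathcal B_\eps(u)$, and then test the result against $v_\eps$ itself. First I would verify that $v_\eps \in L^2(0,T;H^1_0(\Omega_\eps))$ is admissible as a test function: $w^\eps$ vanishes on $\partial\Omega_\eps$, $u_\eps$ is a Leray solution, and $\mathcal B_\eps(u) \in H^1_0(\Omega_\eps)$ by Lemma \ref{lem:Bogovski}; moreover $\dv v_\eps = 0$ because each column $w_k^\eps$ is divergence-free and the Bogovskii correction \eqref{Bog} was built to cancel the residual $\dv(w^\eps u) = w^\eps : \nabla u$. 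Using $\partial_t(w^\eps u) = w^\eps \partial_t u$ together with \eqref{Euler-Darcy} or \eqref{Euler} and \eqref{Navier.Stokes.holes}, I get
\begin{align}
\partial_t v_\eps + u_\eps \cdot \nabla v_\eps - \mu_0 \eps^\gamma \Delta v_\eps + \nabla P_\eps = R_\eps,
\end{align}
where $R_\eps$ gathers: (a) the force defect $f_\eps - w^\eps f$; (b) the viscous artefact $-\mu_0 \eps^\gamma \Delta(w^\eps u)$, which Lemma \ref{lem:M_eps} decomposes into a Brinkman-type piece $\mu_0 \eps^{\alpha+\gamma-3} M_\eps u$ plus $\nabla q^\eps$- and $\gamma_\eps$-contributions; (c) the transport commutator $u_\eps\cdot\nabla(w^\eps u) - w^\eps(u\cdot\nabla u)$; (d) the three Bogovskii terms $\partial_t\mathcal B_\eps(u) + u_\eps\cdot\nabla\mathcal B_\eps(u) - \mu_0\eps^\gamma\Delta\mathcal B_\eps(u)$; (e) a $-w^\eps\mathcal R u$ contribution present only in the critical regime; and (f) a $-w^\eps\nabla p$ piece. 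Testing with $v_\eps$ annihilates $\int u_\eps\cdot\nabla v_\eps\cdot v_\eps$ and $\int\nabla P_\eps\cdot v_\eps$, leaving $\tfrac{d}{dt}\tfrac12\|v_\eps\|^2 + \mu_0\eps^\gamma\|\nabla v_\eps\|^2 = \int R_\eps\cdot v_\eps$.

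I would then estimate each piece. The key algebraic cancellation happens in the critical regime: using $\alpha+\gamma=3$ one pairs the Brinkman contribution $\mu_0\langle M_\eps u, v_\eps\rangle$ against $\int w^\eps\mathcal R u\cdot v_\eps$; the slack is bounded by \eqref{M.W^-1,infty} (producing the $\eta_\eps^{-1}\eps^{3-\gamma}$ error), and the $(\Id-w^\eps)$ defect by \eqref{w_eps-1}. In the subcritical regime there is no $\mathcal R u$ in the limit, but the prefactor $\mu_0\eps^{\alpha+\gamma-3}$ is already small, which produces the $\eps^{2\alpha+2\gamma-6}$ contribution. The $\nabla q^\eps$ and $\gamma_\eps$ residues are harmless because $v_\eps$ is divergence-free and belongs to $H^1_0(\Omega_\eps)$, via \eqref{wk.qk.phi}--\eqref{wk.qk.phi.1}. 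Pieces (a), (d), (f) are routine Cauchy-Schwarz applications using Lemmas \ref{lem:w^eps} and \ref{lem:Bogovski}, accounting for the $\eps^{2\gamma}$ and $\eta_\eps^2$ terms. The mixed viscous cross-term $\mu_0\eps^\gamma\int \partial_i w^\eps\,\partial_i u\cdot v_\eps$ is the first place where the Hardy estimate \eqref{Hardy} is essential; it yields the $\eta_\eps\eps^{2\alpha-\gamma-3}$ error.

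The main obstacle is the nonlinear commutator $(c)$. Expanding $\nabla(w^\eps u) = (\nabla w^\eps)\otimes u + w^\eps\nabla u$ and substituting $u_\eps = w^\eps u - v_\eps - \mathcal B_\eps(u)$ in both factors, I would split the commutator into three pieces. The first, $-\int v_\eps\cdot w^\eps\nabla u\cdot v_\eps$, is bounded by $\|\nabla u\|_\infty\|v_\eps\|^2$ and absorbed into the $C\|v_\eps\|^2$ term for Gronwall. The second collects $(\Id-w^\eps)$- and $\mathcal B_\eps$-residues, controlled by Lemmas \ref{lem:w^eps} and \ref{lem:Bogovski}. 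The delicate piece is the one carrying a $\nabla w^\eps$ factor: after substitution it becomes essentially $\int(\nabla w^\eps)\cdot u\cdot(v_\eps + \mathcal B_\eps(u) - w^\eps u)\cdot v_\eps$. Applying Hardy \eqref{Hardy} twice to the self-interaction term I obtain
\begin{align}
\Bigl|\int|\nabla w^\eps|\,|u|^2\,|v_\eps|^2\Bigr| \lesssim \|u\|_\infty^2\,\eta_\eps\,\|\nabla v_\eps\|_{L^2(\Omega_\eps)}^2,
\end{align}
which is absorbed into the viscous dissipation and is precisely the source of the $(\mu_0\eps^\gamma - C\eta_\eps)$ coefficient on the left-hand side of \eqref{est.v_eps}--\eqref{est.v_eps.Euler}. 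For the cross-term where $v_\eps$ gets replaced by $u_\eps$ (via the substitution above), a single Hardy plus the viscous-energy bound $\|\nabla u_\eps\|_{L^2_{t,x}} \lesssim (\mu_0\eps^\gamma)^{-1/2}$ from \eqref{energy.inequality} gives a contribution bounded by $C_{\mu_0}\eta_\eps^{-1}\eps^{2\alpha+\gamma-3}$-type errors; this is exactly where the asymmetry $\alpha>\gamma$ (or $\mu_0$ large in the subcritical case) enters to keep the resulting bound useful.

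Finally I would collect all contributions, with a time-integration from $0$ to $t$, and the two statements \eqref{est.v_eps} and \eqref{est.v_eps.Euler} follow. The distinction between the two cases arises solely from (i) presence vs absence of the $\mathcal R u$ cancellation, which affects the $\eps^{2\alpha+2\gamma-6}$ vs absence thereof, and (ii) the $\mu_0$-weighted version of the dissipation absorption. The hardest step is certainly the Hardy-based absorption in the nonlinear commutator, since it is this step that fixes the admissible range of $\eta_\eps$ and ultimately the $\alpha>\gamma$ (or $\mu_0 \geq M$) constraint.
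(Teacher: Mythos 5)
Your overall architecture (the corrector $\check u_\eps = w^\eps u - \mathcal B_\eps(u)$, an energy/Gronwall estimate for $v_\eps$, Hardy absorption into the dissipation, Lemma \ref{lem:M_eps} for the Brinkman cancellation) matches the paper, but the central step as you state it is not available: you test the equation of $v_\eps$ with $v_\eps$ and claim the identity $\tfrac{d}{dt}\tfrac12\|v_\eps\|^2+\mu_0\eps^\gamma\|\nabla v_\eps\|^2=\int R_\eps\cdot v_\eps$. Since $u_\eps$ is only a Leray solution of the 3D Navier--Stokes system, neither $u_\eps$ nor $v_\eps$ is an admissible test function in the equation for $u_\eps$; the cancellation $\int(u_\eps\cdot\nabla u_\eps)\cdot u_\eps=0$ and the energy \emph{equality} are not known. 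The paper circumvents this by a relative-energy argument: expand $\tfrac12\|v_\eps\|^2=\tfrac12\|u_\eps\|^2-(u_\eps,\check u_\eps)+\tfrac12\|\check u_\eps\|^2$, control $\tfrac12\|u_\eps(t)\|^2$ by the energy \emph{inequality} \eqref{energy.inequality}, and compute the cross terms by testing the weak formulation of \eqref{Navier.Stokes.holes} with the Lipschitz field $\check u_\eps$ and the (classical) equation of $\check u_\eps$ with $v_\eps$; this is why \eqref{est.v_eps} is an inequality. Your derivation needs this repair before anything else.

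More seriously, your treatment of the decisive nonlinear cross term does not yield the claimed error terms. For the piece carrying $\nabla w^\eps$, $u$ and $u_\eps$ you propose Hardy \eqref{Hardy} on $v_\eps$ together with the a priori bound $\|\nabla u_\eps\|_{L^2((0,t)\times\Omega_\eps)}\lesssim(\mu_0\eps^\gamma)^{-1/2}$; after absorbing $\delta\eps^\gamma\|\nabla v_\eps\|^2$ by Young this leaves an error of order $\eta_\eps^2\eps^{-2\gamma}$, which is not dominated by $\eta_\eps\eps^{2\alpha-\gamma-3}+\eta_\eps^{-1}\eps^{3-\gamma}+\eps^{2\gamma}+\eta_\eps^2$ unless $\eta_\eps\lesssim\eps^{2\alpha+\gamma-3}$, and for the choice $\eta_\eps\sim\eps^\gamma$ used to deduce Theorem \ref{th:Euler-Brinkman} it is $O(1)$, so the resulting estimate is useless; the same objection applies to bounding $\int(\nabla w^\eps)u\cdot(w^\eps u)\cdot v_\eps$ via \eqref{wk.qk.phi.1} and Hardy. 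The paper never lets $\nabla w^\eps$ meet two order-one factors: after the integration by parts \eqref{manipulate.trilinear} the trilinear terms are reorganized into $-\int(v_\eps\cdot\nabla\check u_\eps)\cdot v_\eps$ (Hardy, absorbed, which is the sole source of the $C\eta_\eps\|\nabla v_\eps\|^2$ loss) plus terms such as $\int(\Id-w^\eps)(u\cdot\nabla u)\cdot v_\eps$ and $\int(\check u_\eps\cdot\nabla(\check u_\eps-u))\cdot v_\eps$, whose smallness comes from $(\Id-w^\eps)u$ and $\mathcal B_\eps(u)$ in $L^2$ via \eqref{w_eps-1} and \eqref{Bog.H^1}; a Young weight $\eps^{-\gamma}$ then produces exactly $\eta_\eps\eps^{2\alpha-\gamma-3}$. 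Relatedly, your attributions are off: the $\eta_\eps^{-1}\eps^{2\alpha+\gamma-3}$ (resp.\ $\eta_\eps^{-1}\eps^{3-\gamma}$) term comes from the $(M_\eps-\mathcal R)$ remainder \eqref{M.W^-1,infty}, not from the convection term, and the condition $\alpha>\gamma$ or $\mu_0\geq M$ plays no role inside the Proposition at all — it is used only afterwards, when $\eta_\eps$ is chosen and $C\eta_\eps$ must be absorbed into $\mu_0\eps^\gamma$.
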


 \begin{proof}[Proof of Theorem \ref{th:Euler-Brinkman}]
 We choose $\eta_\eps = \frac 1 C \eps^\beta $ with $\beta = \max\{1,\gamma\}$ such that we may drop the second term on the left-hand side of \eqref{est.v_eps}. Note that as $\gamma = 3 -\alpha$ and $\alpha \in (3/2,3)$, the assumption $\eps^\alpha \leq \eta_\eps \leq \eps$ is satisfied for all $\eps$ sufficiently small (for $\eps$ of order $1$, the assertion of the theorem is an immediate consequence of the energy inequality \eqref{energy.inequality}).
 
 Then, by Gronwall's inequality, Proposition \ref{pro:v_eps} yields
  \begin{align}
      \|v_\eps(t)\|_{L^2(\Omega_\eps)}^2 \lesssim \|v_\eps(0)\|_{L^2(\Omega_\eps)}^2 + \|(f_\eps - f)\|_{L^2(0,T;L^2(\Omega_\eps))}^2 + \left(\eps^{2 \alpha -3}  +  \eps^{6 - 2\alpha} \right) 
  \end{align}
  and we deduce with \eqref{v_eps.to.u_eps.2}, which only gives a higher order error, that
  \begin{align}
      \|(u_\eps - u)(t)\|_{L^2(\Omega_\eps)}^2 \lesssim \|(u_\eps - u)(0)\|_{L^2(\Omega_\eps)}^2 +\|(f_\eps - f)\|_{L^2(0,T;L^2(\Omega_\eps))}^2+  \left(\eps^{2 \alpha -3}  +  \eps^{6 - 2\alpha} \right) .
  \end{align}
  This finishes the proof.
%   Note that $\|(u_\eps - u)(0)\|_{L^2(\Omega_\eps)}^2$ appears on the right-hand side instead of $\|(u_\eps - u)(0)\|_{L^2(\R^3)}^2$ since $v_\eps = 0$ in $\R^3 \setminus \Omega_\eps$.
%  Choosing  $\beta =\max\{1,3-\alpha\}$, yields
%   \begin{align}
%       \|( u_\eps - u)(t)\|_{L^2(\R^3)}^2 \leq  C \left(\|(u_\eps - u)(0)\|_{L^2(\Omega_\eps}^2 + \|(f_\eps - f)\|_{L^2(0,T;L^2(\Omega_\eps))}^2 +  \left(\eps^{2 \alpha - 3 } +  \eps^{6-2\alpha} + \eps^{\alpha -1}\right) \right) .
%   \end{align}
%   We observe that $\alpha - 1 \leq \max\{2\alpha -3, 6 - 2\alpha\}$ to finish the proof.
 \end{proof}
 
\begin{proof}[Proof of Theorem \ref{th:Euler}]
    We choose $\eta_\eps = \delta \eps^\beta$ with $\beta = \max\{\gamma,1\}$
    and 
    \begin{align}
        \delta = \begin{cases} 1 &\qquad \text{if } \gamma = \alpha, \\
        \frac {\mu_0} C & \qquad \text{if } \gamma < \alpha.
        \end{cases}
    \end{align}
    This choice guarantees that $\eps^\alpha \leq \eta_\eps \leq \eps$ is satisfied for all $\eps$ sufficiently small.
    Moreover, choosing $M = C$, the assumption $\mu_0 \geq M$ if $\gamma = \alpha$ allows us to drop the second term on the left-hand side in \eqref{est.v_eps.Euler} in all cases. Therefore, arguing as in the proof above yields
    \begin{align}
      \|(u_\eps - u)(t)\|_{L^2(\Omega_\eps)}^2 &\lesssim \|(u_\eps - u)(0)\|_{L^2(\Omega_\eps)}^2 +\|(f_\eps - f)\|_{L^2(0,T;L^2(\Omega_\eps))}^2  \\
      &  +  \left(\eps^{2 \alpha + 2\gamma-6} + \eps^{2 \alpha-3} + \eps^{2 \alpha + \gamma- 4}   + \eps^{2\gamma} \right).
  \end{align}
%   Choosing $\beta = \max\{\gamma,1\}$ yields
%     \begin{align}
%       \|(u_\eps - u)(t)\|_{L^2}^2 &\leq C \left(\|(u_\eps - u)(0)\|_{L^2(\Omega_\eps)}^2 +\|(f_\eps - f)\|_{L^2(0,T;L^2(\Omega_\eps))}^2 \right. \\
%       &\left. \qquad +  \left(\eps^{2 \alpha + 2\gamma-6} + \eps^{2 \alpha + -3}    + \eps^{2\gamma} + \eps^{2 \alpha  + \gamma - 4} \right) \right).
%   \end{align}
   We observe that $2 \alpha  + \gamma - 4 \geq \min\{2\alpha -3, 2 \alpha + 2\gamma - 6\}$ to finish the proof.
\end{proof}

\begin{proof}[Proof of Proposition \ref{pro:v_eps}]
We focus on the critical case $\gamma = 3 -\alpha$ where $u$ solves \eqref{Euler-Darcy}. We discuss the necessary adaptions for the subcritical case $\gamma > 3- \alpha$ in the last step of the proof.
Throughout the proof we write $\lesssim$ for $\leq C$ with $C$ as specified in the statement of the proposition.

\medskip

\noindent \emph{Step 1: PDE solved by $\check u_\eps := w^\eps u - \mathcal B_\eps(u)$:}
We observe that $\check u_\eps$ satisfies $\check u_\eps = 0$ on $(0,T) \times \partial  \Omega_\eps$ and, in $(0,T) \times \Omega_\eps$
\begin{align} \label{PDE.check.u_eps}
\begin{aligned}
    \partial_t \check u_\eps - \eps^\gamma \Delta \check u_\eps  + w^\eps (u \cdot \nabla u) + w^\eps \nabla p &= w^\eps f +  (M_\eps  -  w^\eps \mathcal R) u - \eps^\gamma \nabla q^\eps u \\
    &- 2 \eps^\gamma \nabla w^\eps \nabla u - \eps^\gamma w^\eps \Delta u + \mathcal B_\eps(\partial_t u) + \eps^\gamma \Delta \mathcal B_\eps(u),
    \end{aligned}
 \end{align}
with $M_\eps$ as in \eqref{decomposition.M.gamma}. Moreover, $\dv \check u_\eps = 0$.

\emph{Step 2: Relative energy inequality:}
We consider the relative energy $\frac 1 2 \|v_\eps\|_{L^2}^2$.
We estimate using the energy inequality \eqref{energy.inequality} for $u_\eps$ as well as $\check u_\eps \in L^\infty(H^1)$, $\partial_t \check u_\eps \in L^1(H^{-1})$
\begin{align} \label{energy.0}
    \frac 1 2 \|v_\eps(t)\|_{L^2(\Omega_\eps)}^2 &= \frac 1 2 \|u_\eps(t)\|_{L^2(\Omega_\eps)}^2  -  ( \check u_\eps(t),u_\eps(t))_{L^2(\Omega_\eps)}  + \frac 1 2 \|\check u_\eps(t)\|_{L^2(\Omega_\eps)}^2 \\
    & \leq \frac 1 2 \|v_\eps(0)\|_{L^2(\Omega_\eps)}^2 - \eps^\gamma \int_0^t \|\nabla u_\eps\|_{L^2(\Omega_\eps)}^2 \dd s + \int_0^t \int_{\Omega_\eps} f_\eps \cdot u_\eps \dd x \dd s \\
    &- \int_0^t  \int_{\Omega_\eps} \left(\partial_t \check u_\eps \cdot u_\eps + \partial_t u_\eps \cdot \check u_\eps\right) \dd x \dd s + \int_0^t \int_{\Omega_\eps} \partial_t \check u_\eps \cdot \check u_\eps \dd x \dd s.
\end{align}
Using the equation solved by $u_\eps$, we have
\begin{align} \label{energy.1}
   - \int_0^t \int_{\Omega_\eps} \partial_t u_\eps \cdot \check u_\eps \dd x \dd s =  \int_0^t \int_{\Omega_\eps} \left( (u_\eps \cdot \nabla u_\eps) \cdot \check u_\eps  + \eps^\gamma  \nabla u_\eps \cdot \nabla \check u_\eps- f_\eps \cdot \check u_\eps \right)\dd x \dd s
\end{align}
and likewise, using the equation of $\check u_\eps$
\begin{align} \label{energy.2}
    \int_0^t \int_{\Omega_\eps}  \partial_t \check u_\eps \cdot v_\eps &=  -  \int_0^t  \int_{\Omega_\eps} \left(\eps^\gamma \nabla \check u_\eps \cdot \nabla v_\eps  +  (w^\eps (u \cdot \nabla u) ) \cdot (\check u_\eps - u_\eps) -  (w^\eps f +  \tilde F_\eps) \cdot v_\eps \right) \dd x \dd s \qquad 
\end{align}
where 
\begin{align}
    \tilde F_\eps = - w^\eps \nabla p +  (M_\eps  - w^\eps \mathcal R ) u - \eps^\gamma \nabla q^\eps u - 2 \eps^\gamma\nabla w^\eps \nabla u -\eps^\gamma w^\eps \Delta u
    + \mathcal B_\eps(\partial_t u) + \eps^\gamma \Delta \mathcal B_\eps(u).
\end{align}
inserting \eqref{energy.1}--\eqref{energy.2} in \eqref{energy.0} and denoting
\begin{align}
    F_\eps = \tilde F_\eps + (w^\eps f -f_\eps) 
\end{align}
yields 
\begin{align} \label{energy.v_eps}
     \frac 1 2 \|v_\eps\|_{L^2}^2(t) +\eps^\gamma  \int_0^t \|\nabla v_\eps\|_{L^2(\Omega_\eps)}^2 \dd s &\leq   \int_0^t \int_{\Omega_\eps} \Bigl( (u_\eps \cdot \nabla u_\eps) \cdot \check u_\eps  -  (w^\eps (u \cdot \nabla u) ) \cdot v_\eps  +  F_\eps \cdot v_\eps \Bigr) \dd x \dd s. \quad 
\end{align}
% Using $\dv u = \dv u_\eps = 0$ yields by integration by parts
% \begin{align}
%     \int_0^t  \int_{\Omega_\eps}  (u_\eps \cdot \nabla u_\eps) \cdot  u  +  (u \cdot \nabla u)  \cdot  u_\eps \dd x \dd s 
%     &= \int_0^t  \int_{\Omega_\eps}  \left((u - u_\eps) \cdot \nabla u)\right) \cdot u_\eps  \dd x \dd s  \\
%     &= \int_0^t  \int_{\Omega_\eps}  \left((u - u_\eps) \cdot \nabla u)\right) \cdot (u_\eps - u) \dd x \dd s .
% \end{align}
Thus, we deduce
\begin{align} \label{PDE.v_eps}
    \frac 1 2 \|v_\eps(t)\|_{L^2(\Omega_\eps)}^2  + \eps^\gamma \|\nabla v_\eps\|_{L^2((0,t) \times \Omega_\eps)}^2 &\leq \frac 1 2 \|v_\eps(0)\|_{L^2(\Omega_\eps)}^2 +  |I_1| + |I_2|
\end{align}
where
\begin{align}
    I_1 &= \int_0^t \int_{\Omega_\eps} \Bigl( (u_\eps \cdot \nabla u_\eps) \cdot \check u_\eps  -  (w^\eps (u \cdot \nabla u) ) \cdot v_\eps \Bigr) \dd x \dd s, \\
    % I_2 &= \int_0^t \int_{\Omega_\eps}  \left((\Id - w^\eps) (u \cdot \nabla u)\right) \cdot u_\eps + (w^\eps(u\cdot \nabla u)) \cdot \check u_\eps + (u_\eps \cdot \nabla u_\eps)) \cdot  ((w^\eps -\Id) u - \Bog_\eps(w^\eps : \nabla u))    \dd x \dd s , \\ 
    I_2 &= \int_0^t \int_{\Omega_\eps}  F_\eps \cdot v_\eps \dd x \dd s .
\end{align}

    \noindent \emph{Step 3: Bound of $I_1$:}  
    We first manipulate the first term in $I_1$. Using $u_\eps = \check u_\eps = 0$ on $\partial \Omega_\eps$ as well as $\dv u = \dv u_\eps = 0$ yields by integration by parts
    \begin{align}\label{manipulate.trilinear}
    \begin{aligned}
        \int_0^t \int_{\Omega_\eps} (u_\eps \cdot \nabla u_\eps) \cdot \check u_\eps \dd x \dd s &= -\int_0^t \int_{\Omega_\eps}  (u_\eps \cdot \nabla \check u_\eps) \cdot ( u_\eps -\check u_\eps)  \dd x \dd s \\
        &= -\int_0^t \int_{\Omega_\eps}  (v_\eps \cdot \nabla \check u_\eps) \cdot v_\eps   \dd x \dd s + \int_0^t \int_{\Omega_\eps}  (\check u_\eps \cdot \nabla \check u_\eps) \cdot v_\eps   \dd x \dd s.
    \end{aligned}
    \end{align}
    This allows us to rewrite
    \begin{align}
        I_1 &= -\int_0^t \int_{\Omega_\eps}  (v_\eps \cdot \nabla \check u_\eps) \cdot v_\eps   \dd x \dd s + \int_0^t \int_{\Omega_\eps} (\Id - w^\eps) ( u \cdot \nabla u) \cdot v_\eps   \dd x \dd s \\
        &+ \int_0^t \int_{\Omega_\eps}  ( ( \check u_\eps - u) \cdot \nabla  u) \cdot v_\eps   \dd x \dd s + \int_0^t \int_{\Omega_\eps}  ( \check u_\eps \cdot \nabla  (\check u_\eps - u)) \cdot v_\eps   \dd x \dd s =: I_1^1 + I_1^2 + I_1^3 + I_1^4.
    \end{align}
    We recall $\check u_\eps = w^\eps u - \mathcal B_\eps(u)$ to estimate by the regularity assumptions of $u$, \eqref{Hardy} and \eqref{Bog.H^1} combined with \eqref{w_eps-1.p=3} and another integration by parts
    \begin{align} \label{I_1^1}
    \begin{aligned}
        |I_1^1| &\lesssim \|v_\eps\|_{L^2(0,t;L^2(\Omega_\eps))}^2 \|w_\eps\|_{L^\infty(\R^3)} \|\nabla u\|_{L^\infty(0,t;L^\infty(\R^3))} + \|\nabla w_\eps |v_\eps|^2\|_{L^1(0,t;L^1(\Omega_\eps))} \|u\|_{L^\infty(0,t;L^\infty(\R^3))} \\
        &+  \|\nabla v_\eps\|_{L^2(0,t;L^2(\Omega_\eps))} \| v_\eps\|_{L^2(0,t;L^6(\Omega_\eps))} \|\mathcal B_\eps(u)\|_{L^\infty(0,T;L^3(\Omega_\eps)} \\
        & \lesssim \|v_\eps\|_{L^2(0,t;L^2(\Omega_\eps))}^2 + \eta_\eps \left(1 + \eps^{\alpha -1} |\log \eps|^{\frac 13} \right) \|\nabla v_\eps\|_{L^2(0,t;L^2(\Omega_\eps))}^2 \\
        & \lesssim \|v_\eps\|_{L^2(0,t;L^2(\Omega_\eps))}^2 + \eta_\eps   \|\nabla v_\eps\|_{L^2(0,t;L^2(\Omega_\eps))}^2,
    \end{aligned}
    \end{align}
    where we used $\alpha  > 1$ in the last estimate.
    
    By the regularity assumptions of $u$ and \eqref{w_eps-1}, we have 
    \begin{align}
        |I_1^2| \lesssim   \|v_\eps\|_{L^2(0,t;L^2(\Omega_\eps))}^2 + \eta_\eps  \eps^{2 \alpha  - 3}.
    \end{align}
    Similarly, relying additionally on \eqref{Bog.H^1},
    \begin{align}
        |I_1^3| \lesssim \|v_\eps\|_{L^2(0,t;L^2(\Omega_\eps))}^2 + \eta_\eps  \eps^{2 \alpha - 3}.
    \end{align}
    Finally,   we estimate by another integration by parts
    \begin{align}
        |I_1^4| &\leq \frac 1 4 \eps^\gamma \|\nabla v_\eps\|_{L^2(0,t;L^2(\Omega_\eps))}^2 +  \eps^{-\gamma} \|\check u_\eps| \check u_\eps - u|\|_{L^2(0,t;L^2(\Omega_\eps))}^2.
        % \int_0^t \Bigl( \|(1-w^\eps) \nabla u\|_{L^2(\R^3)} (\|v_\eps\|_{L^2(\R^3))} + \| v_\eps\|_{L^6(\R^3))} \|\mathcal B_\eps(u)\|_{L^3(\R^3)} + \|\nabla w^\eps |v_\eps|^2\|_{L^1(\R^3)} + \|\Bigr) \dd s  \\
        % &\lesssim \|v_\eps\|_{L^2(0,t;L^2(\R^3))}^2 + \eps^{2 \alpha + \beta - 3} 
    \end{align}
    We estimate using that $u$ and $w^\eps$ are uniformly bounded in $L^\infty$ as well as \eqref{Bog.H^1}, \eqref{w_eps-1} and Sobolev embedding
    \begin{align}
        \|\check u_\eps| \check u_\eps - u|\|_{L^2(0,t;L^2(\Omega_\eps))}^2 &\lesssim \int_0^t \left(\|(w^\eps - \Id) u\|_{L^2(\Omega_\eps))}^2 +  \|\mathcal B_\eps(u)\|_{L^2(\Omega_\eps)}^2 + \|\mathcal B_\eps(u)\|_{L^4(\Omega_\eps))}^2 \right) \dd s \\
        &\lesssim \int_0^t \left(\|(w^\eps - \Id) u\|_{L^2(\Omega_\eps))}^2 +  \|\mathcal B_\eps(u)\|_{L^2(\Omega_\eps)}^2 + \|\nabla \mathcal B_\eps(u)\|_{L^2(\Omega_\eps))}^2 \right) \dd s \\
        & \lesssim \eta_\eps  \eps^{2\alpha - 3}.
    \end{align}
    In summary,  we find,
    \begin{align}  \label{I_1}
        |I_1| \leq C \|v_\eps\|_{L^2(0,t;L^2(\Omega_\eps))}^2 + \left(\frac 1 4 \eps^\gamma + C \eta_\eps  \right)  \|\nabla v_\eps\|_{L^2(0,t;L^2(\Omega_\eps))}^2 + C \eta_\eps  \eps^{2\alpha  - \gamma -3}.
    \end{align}
    \medskip
    
    \noindent \emph{Step 4: Bound of $I_2$:}
     We  split 
     \begin{align}
         I_2 = I_2^1 + I_2^2 + I_2^3 + I_2^4 
     \end{align}
     where
     \begin{align}
         I_2^1 &= \int_0^t \int_{\Omega_\eps} ((\Id - w^\eps) (\nabla p - f) + f - f_\eps) \cdot  v_\eps \dd x \dd s, \\
         I_2^2 &= \int_0^t \int_{\Omega_\eps} ((M_\eps -  w^\eps \mathcal R) u )\cdot v_\eps \dd x \dd s, \\
         I_2^3 &= - \eps^\gamma \int_0^t \int_{\Omega_\eps} ( 2  \nabla w^\eps \nabla u  + w^\eps \Delta u  + \nabla q^\eps u) \cdot v_\eps \dd x \dd s, \\
         I_2^4 &= \int_0^t \int_{\Omega_\eps} \left( \mathcal B_\eps(\partial_t u)  \cdot v_\eps + \eps^\gamma \nabla \mathcal B_\eps( u) \nabla v_\eps \right) \dd x \dd s.
     \end{align}
     We estimate
     \begin{align*}
          |I_2^1| &\lesssim  \|(w^\eps -\Id) \nabla p\|^2_{L^2((0,t)\times \Omega_\eps)} + \|(w^\eps -\Id) f\|^2_{L^2((0,t)\times \Omega_\eps)} + \|f_\eps -f\|^2_{L^2((0,t)\times \Omega_\eps)}  +   \|v_\eps\|_{L^2((0,t)\times \Omega_\eps)}^2 \\
          & \lesssim \eta_\eps  \eps^{2\alpha - 3} + \|f_\eps -f\|^2_{L^2((0,t)\times \Omega_\eps)}  + \|v_\eps\|_{L^2((0,t)\times \Omega_\eps)}^2.
     \end{align*}
    We  rewrite
    \begin{align}
        I_2^2  =   \int_0^t \int_{\Omega_\eps}  (w^\eps -\Id) \mathcal Ru  \cdot v_\eps \dd x \dd t  + \int_0^t \langle  (M_\eps - \mathcal R)u, v_\eps \rangle \dd s. 
    \end{align}
    The first term on the right-hand side is estimated as above.
    Combining this with \eqref{M.W^-1,infty} to estimate the second term on the right-hand side yields  for some $\delta > 0$ to be chosen later
    \begin{align}
        | I_2^2| &\leq C \eta_\eps  \eps^{2\alpha  - 3} + C \eta_\eps^{-2} \eps^{2\alpha} +  \|v_\eps\|_{L^2(0,t;L^2(\Omega_\eps))}^2  + C_\delta \eta_\eps^{-1}  \eps^{3} C \eps^{-\gamma} + \delta  \eps^{\gamma} \|\nabla v_\eps\|^2_{L^2(0,t;L^2(\Omega_\eps))}  \\
        &\leq C \eta_\eps \eps^{2\alpha - 3} + C_\delta \eta_\eps^{-1}  \eps^{3 -\gamma} +   \|v_\eps\|_{L^2(0,t;L^2(\Omega_\eps))}^2  + \delta  \eps^{\gamma}  \|\nabla v_\eps\|^2_{L^2(0,t;L^2(\Omega_\eps))}.
    \end{align}
    where we used that $\eta_\eps \geq \eps^\alpha$ and $\alpha \geq 3 - \gamma$ to absorb the term $\eta_\eps^{-2} \eps^{2\alpha}$.
     Next, we estimate using \eqref{wk.qk.phi.1} and \eqref{Hardy}
     \begin{align}
         |I_2^3| &\leq C \eps^{\gamma} \int_0^t  \Bigl(\|(|\nabla w^\eps|^{\frac 1 2 } + |q_k^\eps|^{\frac 1 2 }) \nabla u\|_{L^2} \|(|\nabla w^\eps|^{\frac 1 2 } + |q_k^\eps|^{\frac 1 2 })  v_\eps\|_{L^2}  +  \|w^\eps\|_\infty \|v_\eps\|_{L^2} \Bigr)  \dd s \\
         &\lesssim C_\delta  \eps^{\gamma}\eta_\eps^2 \eps^{\alpha -3} + \delta\eps^{\gamma} \|\nabla v_\eps\|_{L^2(0,t;L^2(\Omega_\eps))}^2 + C \eps^{2\gamma} +   \|v_\eps\|_{L^2(0,t;L^2(\Omega_\eps)}^2 \\
         & \lesssim  C_\delta \eta_\eps^{2} + \delta \eps^{\gamma} \|\nabla v_\eps\|_{L^2(0,t;L^2(\Omega_\eps))}^2 + \eps^{2\gamma} +   \|v_\eps\|_{L^2(0,t;L^2(\Omega_\eps)}^2,
     \end{align}
     where we used $\alpha + \gamma \geq 3$ in the last inequality.
     
Finally, we estimate, relying on \eqref{Bog.H^1} and \eqref{w_eps-1}
     \begin{align}
        |I_2^4| &\leq  C \eta_\eps^{2}  \eta_\eps \eps^{2 \alpha -3} +  \|v_\eps\|_{L^2(0,t;L^2(\Omega_\eps))}^2  + C_\delta \eps^{\gamma}  \eta_\eps \eps^{2 \alpha -3} + \delta \eps^{\gamma}\|\nabla v_\eps\|_{L^2(0,t;L^2(\Omega_\eps))}^2.
     \end{align}
Thus, choosing $\delta$ sufficiently small, we obtain in summary, after absorbing some higher order terms,
\begin{align} \label{I_2}
    |I_2| & \leq \frac 1 4 \eps^{\gamma} \|\nabla v_\eps\|_{L^2(0,t;L^2(\Omega_\eps))}^2 +C \|v_\eps\|_{L^2(0,t;L^2(\Omega_\eps))}^2 + C\|f_\eps - f\|_{L^2(0,t;L^2(\Omega_\eps))}^2 \\
    &+ C \left( \eta_\eps \eps^{2 \alpha -3} + \eta_\eps^{-1}\eps^{3- \gamma } + \eps^{2\gamma} + \eta_\eps^{2}\right).
\end{align}

\medskip
     
    \noindent \emph{Step 5: Conclusion:}
    Inserting the bounds for $I_1$ from \eqref{I_1} and $I_2$ from \eqref{I_2} 
    into \eqref{PDE.v_eps} yields \eqref{est.v_eps}.
    
    \medskip
    
    \noindent \emph{Step 6: Adaptations in the subcritical case:}
    Let now $\gamma > 3 -\alpha$ and let $u$  solves the the Euler equations \eqref{Euler}. There are only very little changes in the proof in this case.
    In Step 1, the only differences are that  in the PDE solved by $\check  u$, \eqref{PDE.check.u_eps} all inctances of $\eps^\gamma$
    should be replaced by $\mu_0 \eps^\gamma$ (in the critical case, we assumed $\mu_0 = 1$) and that  $(M_\eps -  w^\eps \mathcal R) u$ 
    has to be replaced by $\mu_0 \eps^{\gamma + \alpha -3} M_\eps u$ . Consequently, estimate \eqref{PDE.v_eps} still holds up to replacing all instances of $\eps^\gamma$ by $\mu_0 \eps^\gamma$ and where in the source $F_\eps$ (appearing in $I_3$)  the term $(M_\eps -  w^\eps \mathcal R) u$ is likewise replaced by  $\mu_0 \eps^{\gamma + \alpha -3} M_\eps u$. In particular, the estimates for $I_1$  in Steps 3 still apply, and all the estimates of Step 4 for $I_2$ are unaffected except for the estimate of $I_2^2$
    which now takes the form
    \begin{align}
        I_2^2 &= \mu_0 \eps^{\gamma + \alpha -3}  \int_0^t \int_{\Omega_\eps} (M_\eps u )\cdot v_\eps \dd x \dd s \\
        &=\mu_0  \eps^{\gamma + \alpha -3}  \int_0^t \int_{\Omega_\eps}  ((M_\eps - \mathcal R ) u )\cdot v_\eps \dd x \dd s + \mu_0 \eps^{\gamma + \alpha -3} \int_0^t \int_{\Omega_\eps}  (\mathcal R u )\cdot v_\eps \dd x \dd s.
    \end{align}
    Thus, we estimate with Lemma \ref{lem:M_eps}
    \begin{align}
         |I_2^2| &\leq \mu_0^2 \eps^{2\gamma +2 \alpha -6}\left( \eta_\eps^{-2} \eps^{2 \alpha} + 1\right)  +  \|v_\eps\|_{L^2((0,t)\times \Omega_\eps)}^2 +  C_\delta \mu_0 \eta_\eps^{-1} \eps^{3} \eps^{\gamma +2 \alpha -6} + \delta  \mu_0 \eps^{\gamma} \|\nabla v_\eps\|^2_{L^2((0,t)\times \Omega_\eps)} \\
         &\leq   \|v_\eps\|_{L^2((0,t)\times \Omega_\eps)}^2+ \delta  \eps^{\gamma} \|\nabla v_\eps\|^2_{L^2((0,t)\times \Omega_\eps)} + C_{\mu_0,\delta}\left( \eps^{2\gamma + 2 \alpha - 6} +    \eta_\eps^{-1}\eps^{2 \alpha + \gamma   - 3} \right)
    \end{align}
    and we obtain
\begin{align} 
    |I_2| & \leq \frac 1 4 \mu_0  \eps^{\gamma} \|\nabla v_\eps\|_{L^2(0,t;L^2(\Omega_\eps))}^2 +C \|v_\eps\|_{L^2(0,t;L^2(\Omega_\eps))}^2 + C\|(f_\eps - f)\|_{L^2(0,t;L^2(\Omega_\eps))}^2 \\
    &+ C_{\mu_0} \left( \eta_\eps \eps^{2 \alpha - 3} + \eta_\eps^{-1} \eps^{2 \alpha + \gamma   - 3}  +\eps^{2 \alpha + 2\gamma-6}  + \eps^{2\gamma} + \eta_\eps^{2} \right).
\end{align}
Combining this estimate as before with the estimates for $I_1$, \eqref{I_1}, yields \eqref{est.v_eps.Euler}.
\end{proof}

\subsection{Proof of Theorem \ref{th:Darcy} and Theorem \ref{th:Darcy.quantitative}}
\label{sec:Darcy}

In this subsection, we consider $u_\eps$ a Leray solution to \eqref{Navier.Stokes.holes.rescaled} and $u$ the solution to \eqref{Darcy}.

\begin{proof}[Proof of Theorem \ref{th:Darcy.quantitative}]
    We follow closely the proof of Proposition \ref{pro:v_eps} to obtain an estimate for $v_\eps = \check u_\eps - u_\eps$, where  $\check u_\eps := w^\eps u - \mathcal B_\eps(u)$ with $w^\eps$ as in Section \ref{sec:w} and with $\mathcal B_\eps$ as in Lemma \ref{lem:Bogovski}.
    % The proof is less technical thanks to the simpler form of the Darcy's law \eqref{Darcy} compared to the Euler(-Brinkman) equations.
    
    Recall that $w^\eps$ depends on a parameter $\eta_\eps$. We take $\eta_\eps = \eps^\beta$ for some $1 \leq \beta \leq \alpha$ to be chosen later.
    \medskip
    
\noindent\emph{Step 1: PDE solved by $\check u_\eps$:} We have  $\check u_\eps = 0$ on $(0,T) \times \partial \Omega_\eps$, and, in $(0,T) \times \Omega_\eps$
\begin{align}
    \eps^{6 - 2 \alpha - 2\gamma} \partial_t \check u_\eps - \eps^{3-\alpha} \Delta \check u_\eps  &= f - \nabla p + (M_\eps  - \mathcal R ) u  + \eps^{6 - 2 \alpha - 2\gamma} \partial_t \check u_\eps \\
    &- \eps^{3-\alpha} \nabla q^\eps u - 2 \eps^{3-\alpha} \nabla w^\eps \nabla u - \eps^{3-\alpha} w^\eps \Delta u  + \eps^{3-\alpha} \Delta \mathcal B_\eps(u),
 \end{align}
with $M_\eps$ as in \eqref{decomposition.M.gamma}. Moreover, $\dv \check u_\eps = 0$.

\medskip 

\noindent\emph{Step 2: Relative energy inequality:} Thanks to the energy inequality \eqref{energy.inequality.Darcy} as well as the PDEs solved by $u_\eps$ and $\check u_\eps$, we have, correspondingly to \eqref{energy.v_eps},
\begin{align}
  & \frac  {\eps^{6 - 2 \alpha - 2\gamma}} 2 \|v_\eps(T)\|_{L^2(\Omega_\eps)}^2 +  \eps^{3-\alpha} \int_0^T \|\nabla v_\eps\|_{L^2(\Omega_\eps)}^2 \dd t \\
    & \qquad \leq \frac {\eps^{6-2\alpha-2 \gamma}} 2   \|v^\eps(0)\|^2_{L^2(\Omega_\eps)} + \int_0^T \int_{\Omega_\eps} (F_\eps + f_\eps - f) \cdot v_\eps \dd x \dd t \\
    & \qquad \qquad+ \eps^{6-2\alpha-2 \gamma} \int_0^T \int_{\Omega_\eps} \left((u_\eps \cdot \nabla  u_\eps) \cdot \check u_\eps + \partial_t \check u_\eps \cdot v_\eps \right) \dd x \dd t,
\end{align}
where
\begin{align}
    F_\eps = (M_\eps  - \mathcal R ) u  - \eps^{3-\alpha} \nabla q^\eps u - 2 \eps^{3-\alpha} \nabla w^\eps \nabla u - \eps^{3-\alpha} w^\eps \Delta u  + \eps^{3-\alpha} \Delta \mathcal B_\eps(u).
\end{align}
Thus, using the Poincar\'e inequality \eqref{Poincare.Omega_eps} and Young's inequality,
\begin{align} \label{modulated.energy.Darcy}
    \frac 1 2 \eps^{3-\alpha}\|\nabla v_\eps\|_{L^2(0,T;L^2(\Omega_\eps))}^2  &\leq \eps^{6-2\alpha-2 \gamma} \|v_\eps(0)\|^2_{L^2(\Omega_\eps)} + \|f_\eps - f\|_{L^2(0,T;L^2(\Omega_\eps))}^2  +   |I_1| + |I_2|, \qquad  \\
    I_1 &=  \int_0^T \langle F_\eps, v_\eps \rangle \dd t, \\
    I_2 &= \eps^{6-2\alpha-2 \gamma} \int_0^T \int_{\Omega_\eps} \left((u_\eps \cdot \nabla  u_\eps) \cdot \check u_\eps + \partial_t \check u_\eps \cdot v_\eps\right) \dd x \dd t.
\end{align}

\medskip 

\noindent\emph{Step 3: Estimate of $I_1$:}
We estimate with Lemma \ref{lem:M_eps} and the Poincar\'e inequality \eqref{Poincare.Omega_eps}
\begin{align}
    \left |\int_0^T \langle (M_\eps -  \mathcal R)u, v_\eps \rangle \dd t \right| 
    % & \lesssim  \int_0^T \int_{\Omega_\eps} |(\Id -  w^\eps) \mathcal R u \cdot  v_\eps | \dd x \dd t + \left |\int_0^T \langle (M_\eps - \mathcal R)u, v_\eps \rangle \dd t \right| \\
   &\lesssim  \eps^{\alpha - \beta}\|v_\eps\|_{L^2(0,T;L^2(\Omega_\eps)})  + \eps^{\frac{3 - \beta}2} \|v_\eps\|_{L^2(0,T;H^1(\Omega_\eps))}  \\
    &\lesssim  \left( \eps^{\frac{3-\alpha}2}  \eps^{\alpha - \beta} + \eps^{\frac{3 - \beta}2} \right) \|\nabla v_\eps\|_{L^2(0,T;L^2(\Omega_\eps))} \\
    &\lesssim \eps^{\frac{3 - \beta}2}  \|\nabla v_\eps\|_{L^2(0,T;L^2(\Omega_\eps))},
\end{align}
where we used $\alpha \geq \beta$ in the last inequality.
Moreover, since $\dv v_\eps = 0$ and using \eqref{wk.qk.phi.1} and \eqref{Hardy},
\begin{align}
    \left| \eps^{3-\alpha}  \int_0^T \int_{\Omega_\eps} (v_\eps \cdot \nabla q^\eps) \cdot u  \dd x \dd t\right| &= \left| \eps^{3-\alpha}  \int_0^T \int_{\Omega_\eps}q^\eps \cdot( v_\eps \cdot   \nabla u)  \dd x \dd t\right| \\
    &\lesssim \eps^{3-\alpha} \eps^{\frac \beta 2} \eps^{\frac{\alpha + \beta -3}{2}} \|\nabla v_\eps\|_{L^2(0,T;L^2(\Omega_\eps))} 
    = \eps^{\frac{3-\alpha}2} \eps^{ \beta} \|\nabla v_\eps\|_{L^2(0,T;L^2(\Omega_\eps))},
\end{align}
and similarly
\begin{align}
    \left| \eps^{3-\alpha}  \int_0^T \int_{\Omega_\eps} v_\eps \cdot ( \nabla w^\eps \nabla u + \eps^{3-\alpha} w^\eps \Delta u)  \dd x \dd t\right| &\lesssim \eps^{\frac{3-\alpha}2} \eps^{ \beta} \|\nabla v_\eps\|_{L^2(0,T;L^2(\Omega_\eps))} +  \eps^{3 - \alpha}  \| v_\eps\|_{L^2(0,T;L^2(\Omega_\eps))} \\
    &\lesssim \left(\eps^{\frac{3-\alpha}2} \eps^{ \beta}  +\eps^{\frac{9-3\alpha}2} \right) \|\nabla v_\eps\|_{L^2(0,T;L^2(\Omega_\eps))}.
\end{align}
Finally, by \eqref{Bog.H^1} and \eqref{w_eps-1}
\begin{align}
    \left| \eps^{3-\alpha}  \int_0^T \int_{\Omega_\eps} \nabla v_\eps : \nabla \mathcal B_\eps (u)   \dd x \dd t\right| \lesssim \eps^{3-\alpha} \eps^{\alpha - \frac{3-\beta}2} \|\nabla v_\eps\|_{L^2(0,T;L^2(\Omega_\eps))} = \eps^{\frac{3+\beta}2} \|\nabla v_\eps\|_{L^2(0,T;L^2(\Omega_\eps))}.
\end{align}
Since $\alpha \geq \beta \geq 1$ and $\alpha < 3$, we observe that $\eps^{\frac{3+\beta}2} \lesssim \eps^{\frac{3-\alpha}2} \eps^{ \beta} \lesssim  \eps^{\frac{3 - \beta}2}$ to conclude
\begin{align} \label{I_1.Darcy}
    |I_1| \leq  C \left(\eps^{\frac{3 - \beta}2} + \eps^{\frac{9-3\alpha}2}\right) \|\nabla v_\eps\|_{L^2(0,T;L^2(\Omega_\eps))} \leq\frac 1 8 \eps^{3 - \alpha} \|\nabla v_\eps\|^2 _{L^2(0,T;L^2(\Omega_\eps))} + C \left(\eps^{\alpha - \beta} +\eps^{9-3\alpha} \right). \qquad 
\end{align}

\medskip

\noindent\emph{Step 4: Estimate of $I_2$:} 
% ,  Lemma \ref{lem:apriori.Darcy} and estimates \eqref{Bog.H^1} and \eqref{w_eps-1}
% \begin{align}
%   \eps^{6-2\alpha} \eps^{-2 \gamma}  \left|\int u_\eps \cdot \partial_t \check u_\eps  \dd t \right| \lesssim \eps^{6-2\alpha} \eps^{-2 \gamma} \left(1 + \|\mathcal B_\eps(\partial_t u)\|_{L^2(0,T;L^2(\R^3))} \right) \lesssim \eps^{6-2\alpha} \eps^{-2 \gamma} \left(1 + \eps^{\alpha + 3 \frac{\beta -1}{2}} \right) \lesssim \eps^{6-2\alpha} \eps^{-2 \gamma}.
% \end{align}
% Moreover, similarly as in Step 4 of the proof of Theorem \ref{th:Darcy},
% \begin{align}
%     \eps^{6-2\alpha} \eps^{-2 \gamma} \left| \int u_\eps \cdot( u_\eps \cdot \nabla \check u_\eps) \dd x \dd t \right| \lesssim  \eps^{-2 \gamma} \eps^{6-2\alpha} \left(1  + \eps^{\alpha-3} \eps^\beta (1+\eps^{\alpha -1} |\log \eps|^{\frac 1 3}) \right) \lesssim \eps^{-2 \gamma} \eps^{6-2\alpha} \left(1  + \eps^{\alpha-3} \eps^\beta  \right)
% \end{align}
% such that in total
% \begin{align} \label{I_2.Darcy}
%   |I_2| \lesssim \eps^{-2 \gamma} \eps^{6-2\alpha} \left(1  + \eps^{\alpha-3} \eps^\beta \right).
% \end{align}
Using the identity \eqref{manipulate.trilinear} that still holds  since $u_\eps = \check u_\eps = 0$ on $\partial \Omega_\eps$ and $\dv u = \dv u_\eps = 0$, we can decompose
\begin{align}
    I_2 &= \eps^{6-2\alpha-2 \gamma} \int_0^T \int_{\Omega_\eps}( v_\eps \cdot \nabla \check u_\eps) \cdot v_\eps \dd x \dd t    +  \eps^{6-2\alpha-2 \gamma} \int_0^T \int_{\Omega_\eps} \left((\check u_\eps \cdot \nabla  \check u_\eps) \cdot  v_\eps + \partial_t \check u_\eps \cdot v_\eps\right) \dd x \dd t \\
    &=: I_2^1 + I_2^2
\end{align}
Combining the estimate \eqref{I_1^1} with the Poincaré inequality \eqref{Poincare.Omega_eps}, we have
\begin{align} 
      |I_2^1| &\lesssim \eps^{6-2\alpha -2 \gamma}  \left(\eps^{3 - \alpha} + \eps^\beta \right)   \|\nabla v_\eps\|_{L^2(0,T;L^2(\Omega_\eps))}^2.
\end{align}
Moreover, we estimate using again \eqref{Poincare.Omega_eps} as well as \eqref{Hardy}, \eqref{wk.qk.phi} and \eqref{Bog.H^1} combined with \eqref{w_eps-1}
\begin{align}
    |I_2^2| &\leq  C \eps^{6-2\alpha-2 \gamma} \int_0^T \left( \|v_\eps\|_{L^2(\Omega_\eps)} + \||\nabla w^\eps|^{\frac12} v_\eps\|_{L^2(\Omega_\eps)} \||\nabla w^\eps|^{\frac12} \check u_\eps\|_{L^2(\Omega_\eps)}  \right.\\
    &\qquad \qquad \left.+ \|\nabla v_\eps\|_{L^2(\Omega_\eps)} \|\mathcal B_\eps(u)\|_{L^2(\Omega_\eps)} \right) \dd t \\ 
    & \leq C \eps^{6-2\alpha-2 \gamma} \int_0^T \left( \eps^{\frac{3 - \alpha}2} \|\nabla v_\eps\|_{L^2(\Omega_\eps)}^2 + \eps^\beta \|\nabla v_\eps\|_{L^2(\Omega_\eps)} \|\nabla \check u_\eps\|_{L^2(\Omega_\eps)} + \|\nabla v_\eps\|_{L^2(\Omega_\eps)} \eps^\beta \eps^{\beta + \alpha - \frac 3 2} \right) \dd t\\
    &\leq \frac 1 8 \eps^{3 - \alpha }\|\nabla v_\eps\|_{L^2(0,T;L^2(\Omega_\eps))}^2 + C\left( \eps^{9-3\alpha-4 \gamma} \eps^{2\beta} \eps^{\alpha -3}  + \eps^{12 - 4 \alpha -4 \gamma}\right) 
    % & \lesssim  \eps^{3 - \alpha}\left(C \eps^{6-2\alpha-2 \gamma}+ \frac 1 8 \right) \|\nabla v_\eps\|_{L^2(0,T;L^2(\Omega_\eps))}^2 + C \eps^{6-2\alpha-4 \gamma + 2 \beta}.
\end{align}
Combining these estimates yields
\begin{align} \label{I_2.Darcy}
     |I_2| \leq  \eps^{3 - \alpha}\left(C \eps^{6-2\alpha-2 \gamma}+ C \eps^{3-\alpha - 2 \gamma + \beta} +  \frac 1 8 \right) \|\nabla v_\eps\|_{L^2(0,T;L^2(\Omega_\eps))}^2 + C\left( \eps^{6-2\alpha-4 \gamma + 2 \beta}+ \eps^{12 - 4 \alpha -4 \gamma}\right) \qquad 
\end{align}
\medskip

\noindent \emph{Step 5: Conclusion:} Inserting \eqref{I_1.Darcy} and  \eqref{I_2.Darcy}
 into \eqref{modulated.energy.Darcy}  yields
 \begin{align}
    \eps^{3 - \alpha} &\left(\frac 1 4 - C \eps^{6-2\alpha-2 \gamma}- C \eps^{3-\alpha - 2 \gamma + \beta} \right) \|\nabla v_\eps\|_{L^2(0,T;L^2(\Omega_\eps))}^2 \\  &\lesssim \eps^{6-2\alpha-2 \gamma} \|v_\eps(0)\|^2_{L^2(\Omega_\eps)} + \|f_\eps - f\|_{L^2(0,T;L^2(\Omega_\eps))}^2 
    +  \eps^{6 -2 \alpha - 4 \gamma + 2 \beta}  +  \eps^{\alpha - \beta} + \eps^{9-3\alpha} + \eps^{12 - 4 \alpha -4 \gamma}.
 \end{align}
We choose
\begin{align}
    \beta= \max\left\{1,\alpha - \frac{ 6- 4\gamma}{3}\right\}.
\end{align} 
Then, for all $\eps$ sufficiently small, using the assumptions $\gamma < 3/2$ and $\alpha + \gamma < 3$, the left-hand side is positive and, combination with the Poincaré inequality \eqref{Poincare.Omega_eps} yields
 \begin{align}
   \| v_\eps\|_{L^2(0,T;L^2(\Omega_\eps))}^2  &\lesssim \eps^{6-2\alpha-2 \gamma} \|v_\eps(0)\|^2_{L^2(\Omega_\eps)} + \|f_\eps - f\|_{L^2(0,T;L^2(\Omega_\eps))}^2 \\
   &+ \eps^{\frac{6 - 4\gamma}{3}} + \eps^{\alpha - 1} + \eps^{9-3\alpha} + \eps^{12 - 4 \alpha -4 \gamma}.
 \end{align}
 
Applying \eqref{v_eps.to.u_eps.2} and observing that this only produces a higher order error since $2 \alpha + \beta - 3 \geq \alpha - \beta$ thanks to $\alpha \geq \beta \geq 1$, we find
\begin{align}
         \|u_\eps - u\|_{L^2((0,T) \times \Omega_\eps)}^2  &\lesssim \eps^{6-2\alpha-2 \gamma} \|u_\eps^0 - u_0\|^2_{L^2(\Omega_\eps)} + \|f_\eps - f\|_{L^2((0,T) \times L^2(\Omega_\eps)}^2 \\
         &+ \eps^{\frac{6 - 4\gamma}{3}} + \eps^{\alpha - 1} + \eps^{9-3\alpha} + \eps^{12 - 4 \alpha -4 \gamma}.
\end{align}
This concludes the proof.
\end{proof}

\begin{proof}[Proof of Theorem \ref{th:Darcy}] For simplicity of the notation, we write $u_\eps$ instead of $\tilde u_\eps$ for the extension of $u_\eps$ by $0$ to $\R^3$. Note that the energy inequality \eqref{energy.inequality.Darcy} does not immediately provide uniform a priori estimates for $u_\eps$. The first step of the proof therefore consists in  combining the energy inequality with the  Poincar\'e inequality from Proposition \ref{prop:Poincare} to deduce a uniform a priori bound for $u_\eps$ in $L^2(0,T;L^2(\R^3)$. Then, $u_\eps \wto u$ for some $u \in L^2(0,T;L^2(\R^3))$ along subsequences and it suffices to show that $u$ solves \eqref{Darcy}.

\medskip

\noindent \emph{Step 1: Uniform  a priori estimate}
We claim that,
\begin{align} \label{a.priori.Darcy}
    \|u_\eps\|_{L^2(0,T;L^2(\R^3)} + \eps^{\frac{3-\alpha}2} \|\nabla u_\eps\|_{L^2(0,T;L^2(\R^3)} \lesssim  \eps^{3-\alpha} \eps^{-\gamma} \|u_0^\eps\|_{L^2( \R^3)} + \|f_\eps\|_{L^2(0,T;L^2(\R^3)} \lesssim 1.
\end{align}
    By the energy inequality \eqref{energy.inequality.Darcy} and the Poincar\'e inequality \eqref{Poincare.Omega_eps} we have 
\begin{align}
     \|u_\eps(t)\|^2_{L^2(\Omega_\eps)} + \eps^{2\gamma} \eps^{\alpha-3} \|\nabla u_\eps\|^2_{L^2(0,t;L^2(\Omega_\eps))} \lesssim  \|u_0^\eps\|_{L^2(\Omega_\eps)}^2   +  \eps^{2 \gamma} \eps^{\frac{3 \alpha - 9}{2}} \| f_\eps\|_{L^2(0,T;L^2(\R^3)} \| \nabla u_\eps \|_{L^2(0,T;L^2(\R^3)}.
\end{align}
   Applying Young's inequality, this establishes the estimate for $\nabla u_\eps$, and the estimate for $u_\eps$  follows by another application of the Poincar\'e inequality \eqref{Poincare.Omega_eps}.

\medskip

\noindent \emph{Step 2: Testing with $w^\eps \varphi - \mathcal B_\eps(\varphi)$:}
Let $\varphi \in C_c^\infty((0,T) \times \R^3)$ with $\dv \varphi = 0$. Then, we test the equation \eqref{Navier.Stokes.holes.rescaled} of $u_\eps$ with 
\begin{align}
    \varphi_\eps := w^\eps \varphi - \mathcal B_\eps(\varphi),
\end{align}
where $w^\eps$ is as in Section \ref{sec:w} and depends on a parameter $\eta_\eps$ which we  take as $\eta_\eps = \eps^\beta$ for some $1 \leq \beta < \alpha$ to be chosen late.
This yields
\begin{align}
   \eps^{3-\alpha} \int_0^T \int_{\R^3}  \nabla u_\eps : \nabla \varphi_\eps \dd x \dd t&=  \int_0^T \int_{\R^3} f_\eps \cdot \varphi_\eps \dd x \dd t \\
   &+  \eps^{6-2\alpha -2 \gamma} \int_0^T \int_{\R^3} \left(u_\eps \cdot \partial_t \varphi_\eps + u_\eps \cdot( u_\eps \cdot \nabla  \varphi_\eps)  \right)  \dd x \dd t.
\end{align}
It remains to show
\begin{align}
    I_1 &:= \int_0^T \int_{\R^3} f_\eps \cdot \varphi_\eps \dd x \dd t  \to \int_0^T \int_{\R^3} f \cdot \varphi \dd x \dd t, \\
        I_2 &:= \eps^{6-2\alpha -2 \gamma} \int_0^T \int_{\R^3} \left(u_\eps \cdot \partial_t \varphi_\eps + u_\eps \cdot( u_\eps\cdot\nabla  \varphi_\eps)  \right)  \dd x \dd t \to 0, \\
    I_3 &:=  \eps^{3-\alpha} \int_0^T \int_{\R^3}  \nabla u_\eps : \nabla \varphi_\eps \dd x \dd t \to  \int \mathcal R u \cdot \varphi. 
\end{align}

\medskip 

\noindent \emph{Step 2: Convergence of $I_1$:}
Recalling the assumption that $f_\eps \wto f$ in $L^2(0,T;L^2(\R^3)))$ and that $w_\eps \to \Id$ strongly in $L^2(\supp \varphi)$ by Lemma \ref{lem:w^eps} \ref{item:L^2}, we have
\begin{align}
    \int_0^T \int_{\R^3} f_\eps \cdot(w^\eps  \varphi) \dd x \dd s  \to \int_0^T \int_{\R^3} f \cdot \varphi \dd x \dd s.
\end{align}
Moreover, by \eqref{w_eps-1} and \eqref{Bog.H^1} 
\begin{align}
    \left|\int_0^T \int_{\R^3} f_\eps \mathcal B_\eps(\varphi)  \dd x \dd t \right| \lesssim \eps^\beta \eps^{\alpha - \frac{3 - \beta}2} = \eps^\alpha \eps^{3 \frac {\beta -1}{2}} \to 0
\end{align}
as $\beta \geq 1$.

\medskip

\noindent \emph{Step 3: Convergence of $I_2$:}

We have by the regularity of $u$, using \eqref{Hardy}, the a priori estimate \eqref{a.priori.Darcy} and the estimates \eqref{Bog.H^1}, \eqref{w_eps-1} and \eqref{w_eps-1.p=3}
\begin{align} 
      |I_2| &\lesssim \eps^{6-2\alpha -2 \gamma}  \int_0^t \Bigl(\|u_\eps\|_{L^2(\Omega_\eps)} (\|\partial_t \varphi \|_{L^2(\Omega_\eps)} +  \|\mathcal B_\eps(\partial_t \varphi) \|_{L^2(\Omega_\eps)})  + \|u_\eps\|_{L^6(\Omega_\eps)} \|\nabla u_\eps\|_{L^2(\Omega_\eps)} \|\mathcal B_\eps(\varphi)\|_{L^3(\Omega_\eps)} \Bigr.\\ 
      & \Bigl.\qquad \qquad \qquad  + \|u_\eps\|_{L^2(\Omega_\eps)}^2 \|\nabla \varphi \|_{L^\infty(\Omega_\eps)}  + \||u_\eps|^2 \nabla w^\eps\|_{L^1(\Omega_\eps)} \|\varphi \|_{L^\infty(\Omega_\eps)}  \Bigr) \dd x \dd s \\
    & \lesssim  \eps^{6-2\alpha -2 \gamma} \left(1  + \eps^{\beta} \eps^{\alpha + \frac{\beta-3}2} + \eps^{\alpha-3} \eps^\beta  (1+\eps^{\alpha -1} |\log \eps|^{\frac 1 3}) \right) \\
    & \lesssim  \eps^{6-2\alpha-2 \gamma} + \eps^{3 + \beta-\alpha-2 \gamma} .
\end{align}
% \rhcomment{Adapt above}
% We observe that $I_2$ corresponds to the same term estimated in Step 4 of the proof of Theorem \ref{th:Darcy.quantitative} up to exchanging $u$ with $\varphi$ which enjoys even more regularity. Thus, we have
% \begin{align}
%   I_2 \lesssim \eps^{-2 \gamma} \eps^{6-2\alpha} \left(1  + \eps^{\alpha-3} \eps^\beta (1+\eps^{\alpha -1} |\log \eps|^{\frac 1 3}) \right).
% \end{align}
Thanks to the assumption $\alpha > 1 $,  $\gamma < 3 -\alpha$ and $\gamma < 3/2$, we may choose $\beta \geq 1$ such that 
$ \beta \in (\alpha + 2\gamma - 3,\alpha$,
which implies $I_2 \to 0$ as $\eps \to 0$. 
\medskip

\noindent \emph{Step 4: Convergence of $I_3$:}
With $M_\eps$ as in Lemma \ref{lem:M_eps}, we rewrite
\begin{align}
    \eps^{3-\alpha} \int_0^T \int_{\R^3} \nabla u_\eps : \nabla \varphi_\eps \dd x \dd t &=  \int_0^T \langle \varphi  M_\eps, u_\eps  \rangle \dd t + \eps^{3-\alpha} \int  (u_\eps \cdot \nabla q^\eps) \cdot \varphi \dd x \dd t  \\
    &+ \eps^{3-\alpha} \int_0^T \int_{\R^3}  \nabla u_\eps : \nabla \mathcal B_\eps (\varphi) \dd x \dd t \\
    &-  \eps^{3-\alpha} \int_0^T \int_{\R^3}   u_\eps \cdot  (2 \nabla w^\eps \nabla \varphi_\eps +  w^\eps \Delta \varphi_\eps) \dd x \dd t \\
    &=: I_3^1 + I_3^2 + I_3^3 +I_3^4 .
\end{align}
By Lemma \ref{lem:M_eps} and  \eqref{a.priori.Darcy}, we have
\begin{align}
    \left|I_3^1 - \int_0^T \int_{\R^3} \mathcal R u \cdot \varphi\right| &\lesssim \left(\eps^{\alpha - \beta} \|u_\eps\|_{L^2(0,T;L^2(\R^3))} + \eps^{\frac{3 - \beta}{2}}\|\nabla u_\eps\|_{L^2(0,T;L^2(\R^3))} \right) \|\varphi\|_{L^2(0,T;H^2(\R^3))} \\
    &\lesssim \eps^{\alpha - \beta}  + \eps^{\frac{\alpha - 3}2} \eps^{\frac{3 - \beta}{2}}
    \to 0
\end{align}
since $ \beta < \alpha$.
Moreover, we estimate using \eqref{wk.qk.phi}
\begin{align}
    |I_3^2| = \left| \eps^{3-\alpha} \int_0^T \int_{\R^3}   (q^\eps  \cdot \nabla \varphi) \cdot u_\eps \dd x \dd t \right| \lesssim \eps^{\frac{3-\alpha}2} \to 0.
 \end{align}
Furthermore, by Lemma \eqref{a.priori.Darcy} and \eqref{Bog.H^1} and \eqref{w_eps-1}
\begin{align}
   |I_3^3| \lesssim \eps^{\frac{3-\alpha}2} \eps^{\alpha - \frac{3-\beta}{2}} = \eps^{\frac{\alpha + \beta}{2}} \to 0.
\end{align}
Finally, by \eqref{wk.qk.phi} and \eqref{nabla.w.infty}
\begin{align}
    |I_3^4| \lesssim \eps^{\frac{3-\alpha}2} \to 0.
\end{align}
Therefore, the desired convergence of $I_3$ is established and this finishes the proof.
\end{proof}

\section*{Acknowledgements}

The author warmly thanks
 David G\'erard-Varet for discussions that have led to several improvements of the results.

The author has been supported  by the German National Academy of Science Leopoldina, grant LPDS 2020-10.

\printbibliography
\end{document}